\theoremstyle{plain}
\newtheorem{lemma}{Lemma}[section]
\newtheorem{proposition}[lemma]{\textbf{Proposition}}
\newtheorem{theorem}[lemma]{\textbf{Theorem}}
\newtheorem*{theoremnonum}{\textbf{Theorem}}
\theoremstyle{definition}
\newtheorem{definition}[lemma]{\textbf{Definition}}
\newtheorem{remark}[lemma]{Remark}
\newcommand{\Q}{\mathbb{Q}}
\newcommand{\R}{\mathbb{R}}
\newcommand{\C}{\mathbb{C}}
\newcommand{\p}{\mathbb{P}}
\newcommand{\Tan}{\mathbb{T}}
\newcommand{\col}{\Psi}
\renewcommand{\tilde}{\widetilde}
\newcommand{\sprod}[2]{\langle #1,#2\rangle_{\!\Moebius}}
\newcommand{\dinf}{\omega_\infty}
\newcommand{\South}{{\bm{s}}}
\newcommand{\North}{{\bm{n}}}
\newcommand{\vertOne}{{\bm{w}_1}}
\newcommand{\vertTwo}{{\bm{w}_2}}
\newcommand{\cycle}{\mathcal{C}}
\newcommand{\variety}{Y}
\newcommand{\Moebius}{M}
\newcommand{\Fin}{\operatorname{Fin}}
\colorlet{ecol}{black!50!white}
\definecolor{colR}{rgb}{.932,.172,.172} 
\definecolor{colB}{rgb}{.255,.41,.884} 
\colorlet{colG}{Goldenrod2}
\colorlet{col1}{DarkGoldenrod}
\colorlet{col2}{DarkKhaki}
\colorlet{col3}{MediumPurple}
\colorlet{col4}{IndianRed}
\colorlet{col5}{SkyBlue!90!black}
\colorlet{col6}{MediumSeaGreen}
\tikzstyle{vertex}=[circle, draw, fill=black, inner sep=0pt, minimum size=4pt]
\tikzstyle{svertex}=[circle, draw, fill=black, inner sep=0pt, minimum size=2pt] 
\tikzstyle{edge}=[line width=1.5pt,ecol]
\tikzstyle{redge}=[edge,colR]
\tikzstyle{bedge}=[edge,colB]
\tikzstyle{labelsty}=[font=\scriptsize]
\tikzstyle{dedge}=[
\tikzstyle{sdedge}=[
\title{Zero-sum cycles in flexible polyhedra}
\date{}
\author{%
Matteo Gallet$^{\diamond}$%
\and
Georg Grasegger$^{\ast,\triangleright}$%
\and
Jan Legersk\'y$^{\circ}$%
\and
Josef Schicho$^{\ast, \circ}$
}
\renewcommand{\thefootnote}{\fnsymbol{footnote}}
\begin{document}
\maketitle
\footnotetext{\hspace{0.15cm}$^\ast$ Supported by the Austrian Science Fund (FWF): W1214-N15, project DK9.\\%
$^\circ$ Supported by the Austrian Science Fund (FWF): P31061.\\%
$^\diamond$ Supported by the Austrian Science Fund (FWF): Erwin Schr\"odinger Fellowship J4253.\\%
$^\triangleright$ Supported by the Austrian Science Fund (FWF): P31888.\\%
This is the accepted version of the
following article: \textit{Matteo Gallet, Georg Grasegger, Jan Legersk\'y, and Josef Schicho.
Zero-sum cycles in flexible polyhedra. Bulletin of the London Mathematical Society. 54(1):112--125, 2022},
which has been published in final form at \href{https://doi.org/10.1112/blms.12562}{doi:10.1112/blms.12562}.
}
\begin{abstract}
 We show that if a polyhedron in the three-dimensional affine space with triangular faces is flexible, 
 i.\,e., can be continuously deformed preserving the shape of its faces,
 then there is a cycle of edges
 whose lengths sum up to zero once suitably weighted by~$1$ and~$-1$.
 We do this via elementary combinatorial considerations,
 made possible by a well-known compactification of the three-dimensional affine space
 as a quadric in the four-dimensional projective space.
 The compactification is related to the Euclidean metric,
 and allows us to use a simple degeneration technique that 
 reduces the problem to its one-dimensional analogue, which is trivial to solve.
\end{abstract}
\renewcommand{\thefootnote}{\arabic{footnote}}

\section*{Introduction}

Flexibility of polyhedra --- namely, the existence of a continuous deformation
preserving the shapes of all faces --- is a well-studied topic, 
a limit case of the theory of rigidity of surfaces 
(investigated by, among others, Cohen-Vossen, Nirenberg, Alexandrov, Gluck, see~\cite{Ghomi2020} for an overview) 
that allows for combinatorial and topological or algebro-geometric techniques. 
Although flexible polyhedra have been studied for long, most of their aspects are mysterious.
With the notable exception of Bricard octahedra (see~\cite{Bricard1897}), 
very few families are known and classified, 
and even natural questions like the existence of not self-intersecting flexible polyhedra 
are non-trivial to answer (see~\cite{Connelly1977, Kuiper1979}).
Therefore, providing necessary conditions for flexibility of polyhedra
seems a relevant step towards a better understanding of these objects.

The starting point of the theory of rigidity and flexibility of polyhedra
can be considered to be the work by Cauchy~\cite{Cauchy1813}, 
in which he proved that convex polyhedra are rigid.
Gluck~\cite{Gluck1975} then showed that ``almost all'' simply connected
polyhedra are rigid as well. 
The research then focused on finding necessary and sufficient conditions
for the flexibility of polyhedra, and it is still an active research field,
as witnessed by, among others, recent works by Gaifullin~\cite{Gaifullin2018}
and Alexandrov~\cite{Alexandrov2019, Alexandrov2020}.
In the latter, Alexandrov proved that 
the edge lengths of a flexible (orientable) polyhedron with triangular faces must satisfy a $\Q$-linear relation.

The goal of this paper is to prove that for every flexible polyhedron in the three-dimensional affine space with triangular faces
there is a cycle of edges and a sign assignment such that the sum of the signed edge lengths is zero.
Notice that we do not ask the polyhedron to be homeomorphic to a sphere, neither to be embedded nor immersed.
This is a generalization of the analogous statement that holds for \emph{suspensions},
i.\,e., simplicial complexes that have the combinatorics of a double pyramid;
see \cite{Connelly1974, Mikhalev2001, Alexandrov2011}.
The first statement of this kind appeared, to our knowledge,
in~\cite{Lebesgue1967} concerning flexible octahedra (see Figure~\ref{figure:bricard}).
In a previous work~\cite{Gallet2020} we re-prove this statement 
in the case of flexible octahedra by means of symbolic computation.
There, however, our method provides a finer control on which edges 
take the positive sign, and which the negative sign.
Here, instead, our result is more general, but gives no information 
about which edges are counted as positive and which as negative.
\begin{figure}[ht]
\centering
 \tikz[baseline={(0,0)}]{\node at (0,0) {\includegraphics[width=.37\textwidth, trim = {0 1.1cm 0 1.1cm}, clip]{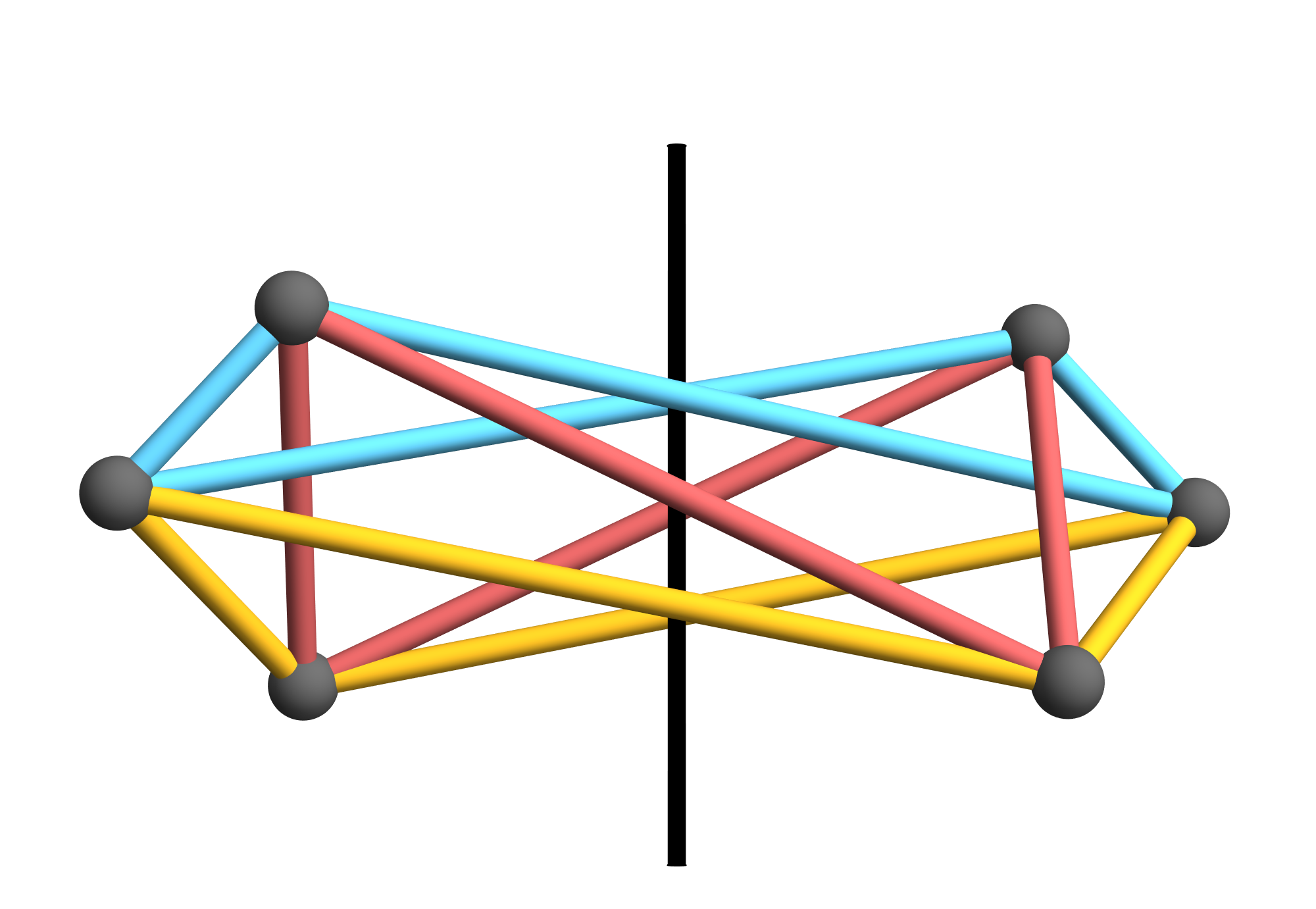}}}
 \tikz[baseline={(0,0)}]{\node[rotate=90] at (0,0) {\includegraphics[width=.3\textwidth, trim = {0 0.5cm 0 0.5cm}, clip]{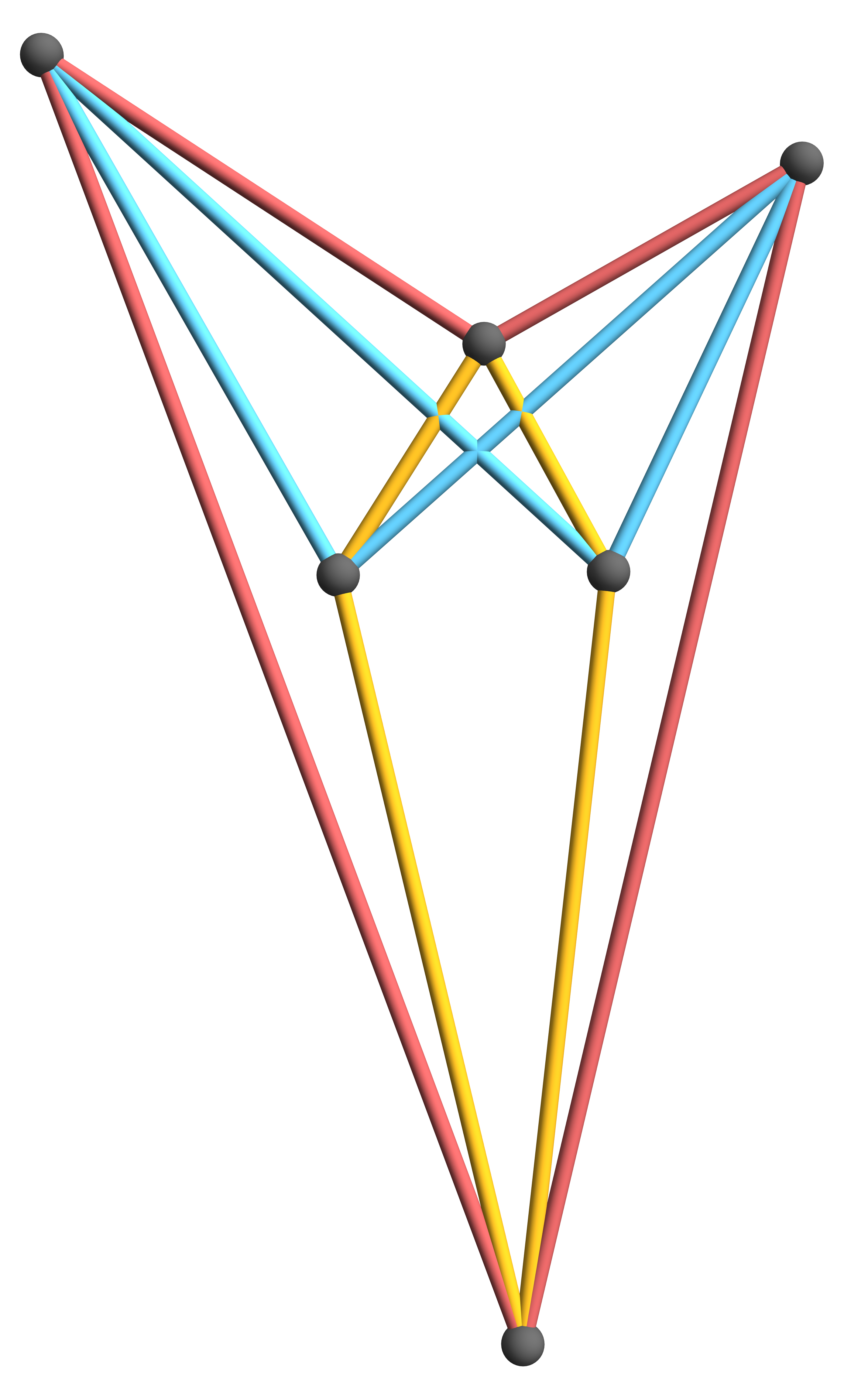}}}
 \caption{Flexible octahedra and the cycles indicated by colors in which the signed lengths of edges sum up to zero.
 The one on the left is a so-called Type I flexible octahedron, which is symmetric with respect to a line,
 while the one on the right is a so-called Type III flexible octahedron, which admits two flat positions,
 one of which is depicted here.}
 \label{figure:bricard}
\end{figure}

This is the main result of our paper (\Cref{theorem:main}):

\begin{theoremnonum}
    Consider a polyhedron with triangular faces that admits a flex,
    i.\,e., a continuous deformation preserving the shapes of all faces.
	Let $\{ \vertOne, \vertTwo \}$ be an edge, and let $\South$ and $\North$ be the two vertices adjacent to both~$\vertOne$ and~$\vertTwo$. 
	If the dihedral angle between the faces $\{\vertOne,\vertTwo, \South \}$ and $\{\vertOne,\vertTwo, \North\}$ is not constant along the flex,
	then there is an induced cycle of edges containing $\{ \vertOne, \vertTwo \}$ but neither the vertex~$\South$ nor~$\North$ 
	and there is a sign assignment such that the signed sum of
	lengths of the edges in the cycle is zero.
\end{theoremnonum}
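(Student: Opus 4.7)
My plan is to turn the flex into an algebraic family of maps into the \Moebius\ quadric $\variety \subset \p^4$, degenerate this family so that all vertices concentrate on a common projective line of $\variety$, and then solve the problem on that line, where it becomes one-dimensional and trivial.

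First, I would identify $\R^3$ with the affine chart $\{x_0 = 1\}$ of the smooth quadric $\variety$ via $p \mapsto [1:p_1:p_2:p_3:\lvert p \rvert^2]$, so that the polar form $\sprod{\cdot}{\cdot}$ recovers a constant multiple of the squared Euclidean distance. The flex then becomes a one-parameter family $\phi_t\colon V \to \variety$ of maps satisfying the bilinear edge constraints $\sprod{\phi_t(u)}{\phi_t(v)} = \ell_{uv}^2/2$ for every edge $\{u,v\}$. These constraints cut out an algebraic variety $X \subset \variety^V$, and the flex sweeps out an algebraic curve in $X$ through our initial realization. The key step is to take a further algebraic degeneration inside $X$ along a one-parameter subgroup chosen so that, in the limit, every vertex image lies on a common projective line $L \subset \variety$. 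Any line contained in $\variety$ meets the hyperplane at infinity $\{x_0 = 0\}$ in a single point $\infty_L$, while the remaining points of $L$ parametrise a line in the affine chart. Vertices whose projective coordinates blow up in the degeneration land on $\infty_L$; those that stay finite pick up a coordinate $x_v$ on $L$ along which the pairing $\sprod{\cdot}{\cdot}$ restricts as $(x_u - x_v)^2$. Each edge among finite vertices therefore forces $\ell_{uv} = \pm(x_u - x_v)$, so any cycle among finite vertices telescopes to a zero signed sum.

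The non-constancy of the dihedral angle at $\{\vertOne,\vertTwo\}$ is what provides the tangent direction along which to degenerate: it is precisely the variation of $\South$ and $\North$ relative to the edge $\{\vertOne,\vertTwo\}$, so by suitably steering the degeneration I expect to be able to send $\South$ and $\North$ to $\infty_L$ while keeping $\vertOne$ and $\vertTwo$ in the affine chart. In the limit, the set $F$ of finite vertices then contains $\vertOne,\vertTwo$ but neither $\South$ nor $\North$. The desired cycle is then a shortest cycle of the polyhedron's $1$-skeleton lying entirely in $F$ and containing $\{\vertOne,\vertTwo\}$: minimality makes it chordless, hence induced in the original polyhedron, and the telescoping identity from the previous paragraph supplies the zero-sum relation.

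I expect the main obstacle to be the construction of the degeneration itself. On the algebraic side, one must exhibit a one-parameter subgroup of $\mathrm{PGL}(5)$ whose action on $X$ produces a limit concentrated on a line of $\variety$ and is realised by a genuine flex direction; this is a Hilbert--Mumford-flavoured problem, and its solvability is what the dihedral-angle hypothesis should be shown to guarantee. On the combinatorial side, one must verify that the resulting splitting $V = F \sqcup \{\text{vertices at infinity}\}$ really excludes $\South$ and $\North$ from $F$ and still leaves a cycle through $\{\vertOne,\vertTwo\}$ inside the induced subgraph on $F$, which presumably requires a local analysis of the triangulated-surface structure around the edge $\{\vertOne,\vertTwo\}$.
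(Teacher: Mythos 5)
Your overall strategy (conformal/M\"obius compactification, degeneration to points at infinity, telescoping a one-dimensional distance along a cycle) is the right one, but the decisive geometric step as you state it is not only unproved, it cannot work. You propose to degenerate so that \emph{all} vertex images lie on a common projective line $L$ contained in the M\"obius quadric $\Moebius$, with the pairing restricting on $L$ as $(x_u-x_v)^2$. However, the polar form $\sprod{\cdot}{\cdot}$ vanishes identically on any line contained in $\Moebius$ (if the quadratic form vanishes at $u$, $v$ and $u+v$, it vanishes on their whole span), so every edge between two finite vertices lying on such a line would be forced to have length zero; equivalently, the finite part of a line contained in $\Moebius$ is an \emph{isotropic} affine line of $\C^3$, on which the complexified squared distance is identically zero. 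The locus on which the extended distance genuinely behaves one-dimensionally is not a line but the finite part $\Fin_p=\Moebius\cap\Tan_p\Moebius\cap\{h\neq0\}$ of the tangent hyperplane section at a \emph{simple infinite} (necessarily non-real) point $p$: a two-dimensional set on which $d(q_1,q_2)=\bigl(\bigl(\pi(q_1)-\pi(q_2)\bigr)^2:1\bigr)$ for a suitable linear map $\pi$ (\Cref{lemma:restriction_distance}). So the telescoping idea is correct, but the carrier you chose for it does not exist once the cycle has edges of positive length.

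Moreover, the two points you defer as ``main obstacles'' are precisely the content of the proof, and your intended way of producing the limit points in the wrong direction. No Hilbert--Mumford-type one-parameter subgroup is needed, and one should \emph{not} try to send both $\South$ and $\North$ to infinity: the paper normalizes by fixing the realization of the triangle $\{\vertOne,\vertTwo,\North\}$ (so $\North$ stays finite), uses the non-constancy of the dihedral angle to conclude that the projection of the Zariski closure $\variety\subset\Moebius^V$ to the $\South$-factor is positive-dimensional and therefore meets $\{h=0\}$, and simply picks $\rho_{\infty}\in\variety$ with $\rho_{\infty}(\South)\in\Moebius_\infty$. There is also no reason for all vertices to concentrate on any small locus: only certain vertices do, and identifying them is the combinatorial heart of the argument. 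One colors a vertex $v$ red if $\rho_{\infty}(v)$ is finite, blue if it is simple infinite with $\col(\rho_{\infty}(v))=\col(\rho_{\infty}(\South))$, and gold otherwise; one shows a gold vertex cannot be adjacent to both a red and a blue vertex, so the triangles carrying both red and blue form disjoint cycles, and the cycle through $\{\vertOne,\vertTwo,\South\}$ yields a closed red walk all of whose vertices are mapped into $\Fin_{\rho_{\infty}(\South)}$ (all blue-walk vertices being mapped to the single point $\rho_{\infty}(\South)$); a shortest cycle through $\{\vertOne,\vertTwo\}$ inside that walk is induced. Finally, $\North$ is excluded not by sending it to infinity but by observing that if it lay on the induced cycle, the cycle would be the triangle $\{\vertOne,\vertTwo,\North\}$ and the zero-sum relation would force these three vertices to be collinear throughout the flex, contradicting the hypothesis on the dihedral angle. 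As it stands, your proposal leaves both the algebraic construction of the limit and the combinatorial extraction of the cycle open, and its one concrete geometric claim is false.
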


To understand the core argument of the proof, consider a one-dimensional analogue of the situation in the theorem:
we are given a cycle with vertices mapped to~$\R$, namely a sequence $x_1, \dotsc, x_k$, $x_{k+1} = x_1$ of real numbers.
The lengths of the edges of this cycle are given by $| x_i - x_{i+1} |$ for $i \in \{1, \dotsc, k\}$. 
If we multiply each edge length by $\mathrm{sign}(x_i - x_{i+1})$ we get $x_i - x_{i+1}$. 
Therefore, if we sum the edge lengths, each multiplied by $\mathrm{sign}(x_i - x_{i+1})$,
we get a telescoping sum, which yields~$0$.
What we do is to reduce the main result of the paper to this simple statement,
and this gives the proof of the theorem.
The way we perform this reduction is by considering ``limits'' of the flex of the polyhedron:
the key point is that these ``limits'' are taken in a suitable compactification of~$\R^3$;
moreover, they are given by points whose coordinates are complex numbers.
The compactification, which is a quadric hypersurface in the four-dimensional projective space,
is chosen in such a way that the usual Euclidean distance of~$\R^3$ extends to these points ``at infinity''.
A ``limit'' of the flex then determines a coloring of the vertices of the polyhedron.
Combinatorial properties of this coloring are obtained from the algebro-geometric features
of the compactification of~$\R^3$. 
These combinatorial properties yield the existence of the cycle passing through the edge~$\{ \vertOne, \vertTwo \}$ in the statement,
such that all its vertices lie, in the ``limit'' situation, on the same tangent space to the quadric hypersurface.
A direct computation shows that on such tangent space the extension of the Euclidean distance behaves
similarly to a one-dimensional distance, and the argument at the beginning of the paragraph concludes the proof.

The paper is structured as follows.
\Cref{compactification} introduces the compactification of the three-dimensional affine space
we are going to use, and proves the basic relations between the compactification and 
the standard Euclidean metric.
\Cref{result} formalizes the notion of flexibility,
sets up the combinatorial constructions that are needed for the main result,
and proves it.

\section{The M\"obius model}
\label{compactification}

We introduce a particular compactification of~$\R^3$, 
which we call the \emph{conformal compactification} (\Cref{definition:compactification}).
This means that we consider~$\R^3$ as a subset of a compact space
(in our case, a projective variety)
and so we add to~$\R^3$ some ``points at infinity'',
similarly as it is done in the construction of the projective space.
The nice feature of this compactification is that it behaves well
with the standard Euclidean distance, in a way that allows extending
this distance in a coherent way also to some of these points at infinity (\Cref{proposition:distance}).
To really use these points at infinity,
we need to extend our setting to the complex numbers.
This section then includes a series of small results regarding
various properties of this extension of the Euclidean distance,
which constitute the algebro-geometric backbone of the main result of the paper.

\begin{definition}
\label{definition:compactification}
We embed $\R^3$ into~$\p^4$ by the map 
\[
 (x,y,z) \mapsto (x:y:z:x^2+y^2+z^2:1) \,,
\]
which we call the \emph{conformal}, or \emph{M\"obius}, \emph{embedding}.
Every point $(x:y:z:r:h)$ in the image of $\R^3$ under the conformal embedding
lies on the hypersurface $\Moebius \subset \p^4$ of the equation
\[
 x^2+y^2+z^2-rh = 0 \,,
\]
which is a projective model for conformal geometry, and this is where the name of the embedding comes from.
More precisely, each and every real point on~$\Moebius$ with $h \neq 0$ corresponds to a unique point in~$\R^3$, and vice versa.
\end{definition}

Here we make precise the connection between~$\Moebius$ 
and the Euclidean distance.

\begin{definition}
The symmetric bilinear form on~$\R^5 \times \R^5$ 
associated to the equation of~$\Moebius$ is denoted by~$\sprod{\cdot}{\cdot}$,
and is given by
\[
	\sprod{(x_1,y_1,z_1,r_1,h_1)}{(x_2,y_2,z_2,r_2,h_2)} = 
	x_1 \, x_2 + y_1 \, y_2 + z_1 \, z_2 -\frac{1}{2}(r_1 \, h_2 + r_2 \, h_1)\,.
\]
In the following, we apply this bilinear form to points in~$\p^4$:
by this we mean that we choose vector representatives of the points
for the computation.
Whenever needed, we specify explicitly which representative we choose:
if $p \in \p^4$, by a slight abuse of notation whenever we write
$p = (x: y: z: r: h)$ we take the vector $(x,y,z,r,h)$ as representative of~$p$.
\end{definition}

\begin{proposition}
\label{proposition:distance}
 Let $u_1, u_2 \in \R^3$ and let $p_1, p_2$ be the corresponding points in~$\Moebius$
 under the conformal embedding, with $p_i = (x_i: y_i: z_i: r_i: 1)$. Then
 \[
  - \frac{1}{2} \, \left\| u_1 - u_2 \right\|^2 = 
  \sprod{p_1}{p_2} \,,
 \]
 where $\left\| \cdot \right\|$ is the Euclidean norm in~$\R^3$. 
\end{proposition}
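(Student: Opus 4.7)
The plan is simply to unfold the definitions on both sides and observe that they match after completing the square. This proposition is really a sanity-check that the bilinear form associated to $\Moebius$ and the Euclidean norm are compatible under the conformal embedding; the proof should be a short direct computation.

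First I would write out $\sprod{p_1}{p_2}$ using the explicit formula for the bilinear form. With the chosen representatives $p_i = (x_i, y_i, z_i, r_i, 1)$, the last-coordinate entries are both $1$, so the bilinear form collapses to
\[
\sprod{p_1}{p_2} = x_1 x_2 + y_1 y_2 + z_1 z_2 - \tfrac{1}{2}(r_1 + r_2).
\]
Next, I would use the fact that, because $p_1$ and $p_2$ lie in the image of the conformal embedding, we have $r_i = x_i^2 + y_i^2 + z_i^2$. Substituting, the right-hand side becomes
\[
x_1 x_2 + y_1 y_2 + z_1 z_2 - \tfrac{1}{2}\bigl(x_1^2+y_1^2+z_1^2+x_2^2+y_2^2+z_2^2\bigr).
\]

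Finally, I would group the terms coordinatewise and recognise each group as half of a squared difference: $x_1 x_2 - \tfrac{1}{2}(x_1^2+x_2^2) = -\tfrac{1}{2}(x_1-x_2)^2$, and similarly for the $y$- and $z$-coordinates. Adding the three contributions gives exactly $-\tfrac{1}{2}\|u_1-u_2\|^2$, which is the desired identity.

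There is really no obstacle here: the claim is essentially a restatement of the polarisation identity for the squared Euclidean norm, engineered into the definition of $\sprod{\cdot}{\cdot}$. The only care needed is to use the prescribed affine representatives (with fifth coordinate equal to $1$), since the bilinear form itself is not invariant under rescaling of the projective representatives; this is consistent with the convention fixed immediately before the statement.
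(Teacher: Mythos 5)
Your proposal is correct and follows essentially the same route as the paper, which also just expands both sides using the definitions (with $r_i = x_i^2+y_i^2+z_i^2$ from the conformal embedding) and matches them term by term. Your additional remark about using the representatives with fifth coordinate $1$ is a fine point of care but does not change the argument.
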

\begin{proof}
 By construction, we have that $u_i = (x_i, y_i, z_i)$. 
 Then expanding both left and right hand side using the definitions 
 yields the same expression.
\end{proof}

We now start the exploration of the ``points at infinity'' of~$\Moebius$. 

\begin{definition}
\label{definition:classification_points}
The points $(x:y:z:r:h)$ in~$\Moebius$ such that $h \neq 0$ are called \emph{finite}.
The other points of~$\Moebius$, namely the ones on the quadric $\Moebius_{\infty} := \Moebius \cap \{ h = 0 \}$, 
form a cone over the plane quadric $A := \{ (x:y:z) \, | \, x^2 + y^2 + z^2 = 0 \}$ 
whose vertex is the point $\dinf := (0:0:0:1:0)$. 
We call the points in $\Moebius_{\infty} \setminus \{ \dinf \}$ \emph{simple infinite}.
\end{definition}

From the description of \Cref{definition:classification_points} we get that 
the only real point of~$\Moebius_{\infty}$ is~$\dinf$.
To fully unveil the information contained in~$\Moebius_{\infty}$
we then pass to the complex numbers. 
All the constructions we made so far are algebraic, 
and so they make sense also over~$\C$.
Hence from now on, all the points, projective spaces and varieties, and quadratic forms
--- unless otherwise stated ---
are considered over the complex numbers. 

\begin{remark}
\label{remark:complex_distance}
In particular, using the same argument as in \Cref{proposition:distance}, 
for any $p_1, p_2 \in \Moebius$ with $p_i = (x_i: y_1: z_i: r_i: 1)$
--- even with complex coordinates --- we have
\[
 - \frac{1}{2} \bigl[ (x_1 - x_2)^2 + (y_1 - y_2)^2 + (z_1 - z_2)^2 \bigr] =
  \sprod{p_1}{p_2} \,.
\]
\end{remark}

Due to \Cref{proposition:distance}, we can extend the (squared) distance in~$\R^3$
to a rational function $d \colon \p^4 \times \p^4 \dashrightarrow \p^1$, defined by
\[ 
	d(p_1,p_2) := ( \sprod{p_1}{p_2}:h_1 h_2) \,,
\]
where $p_i = (x_i: y_i: z_i: r_i: h_i)$ for $i \in \{1,2\}$.
Notice that, however, the value of $d(p_1, p_2)$ does not depend on the choice of the representatives.
If $d(p_1, p_2) = (1:0)$, we write $d(p_1, p_2) = \infty$. 
From the definition, one derives that $d(p_1, p_2)$ is not defined if and only if
\[
 \bigl(
  p_1 \in \Moebius_{\infty} \text{ and } p_2 \in \Tan_{p_1} \Moebius
 \bigr)
 \quad \text{or} \quad 
 \bigl(
  p_2 \in \Moebius_{\infty} \text{ and } p_1 \in \Tan_{p_2} \Moebius
 \bigr) \,,
\]
where $\Tan_{p} \Moebius$ is the \emph{embedded tangent space} of~$\Moebius$ at~$p$,
namely, if $p=(x':y':z':r':h')$, then
\[
	\Tan_{p}\Moebius = \{(x:y:z:r:h) \in \p^4 \, | \, 2(xx' + yy' + zz') - (rh' + hr') = 0\}\,.
\]
In fact, an immediate computation shows that
\[
 \sprod{p_1}{p_2} = 0
 \quad \iff \quad
 p_2 \in \Tan_{p_1} \Moebius
 \quad \iff \quad
 p_1 \in \Tan_{p_2} \Moebius \,.
\]

Here we provide a few results on the behavior of the map~$d$,
in particular when it is applied to points in~$\Moebius_{\infty}$.

A direct computation shows the first result.

\begin{lemma}
	\label{lemma:distance_finite_dinf}
	If $p \in \Moebius$ is finite, then $d(p,\dinf) = \infty$.
\end{lemma}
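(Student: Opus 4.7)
The plan is to verify the statement by a direct evaluation of the bilinear form, exactly as the hint ``A direct computation shows'' suggests. Pick a representative $p = (x:y:z:r:h)$ of the finite point, so by assumption $h \neq 0$, and take the representative $(0,0,0,1,0)$ of $\dinf$.

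First I would unpack the definition of $d$: we have
\[
 d(p, \dinf) = \bigl( \sprod{p}{\dinf} : h \cdot 0 \bigr) \,.
\]
Now I would plug into the explicit formula for $\sprod{\cdot}{\cdot}$ to obtain
\[
 \sprod{(x,y,z,r,h)}{(0,0,0,1,0)} = -\tfrac{1}{2}\bigl(r \cdot 0 + 1 \cdot h\bigr) = -\tfrac{h}{2} \,,
\]
so that $d(p,\dinf) = (-h/2 : 0)$. Since $h \neq 0$, this homogeneous pair equals $(1:0)$, which is precisely the point of $\p^1$ that the excerpt denotes by $\infty$.

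Before concluding I would double-check that we are not in the locus where $d$ is undefined. The undefined locus requires at least one of the arguments to lie in $\Moebius_\infty$ \emph{and} the other to lie in the embedded tangent space at it; here $\dinf \in \Moebius_\infty$, so I need $p \notin \Tan_{\dinf}\Moebius$. But by the equivalences recalled just before the lemma, $p \in \Tan_{\dinf}\Moebius$ iff $\sprod{p}{\dinf} = 0$, which we just saw fails. Hence $d(p,\dinf)$ is defined and equals $\infty$, completing the proof. There is no real obstacle: the argument is a one-line computation combined with the observation that the degenerate case is excluded precisely by $h\neq 0$.
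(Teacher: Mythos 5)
Your computation is correct and is exactly the ``direct computation'' the paper itself invokes (the paper gives no further detail): evaluating $\sprod{p}{\dinf} = -h/2 \neq 0$ while the second coordinate $h\cdot 0$ vanishes, so $d(p,\dinf)=(1:0)=\infty$. The extra check that the pair is not in the undefined locus is a nice touch and is precisely guaranteed by $h \neq 0$, so the proposal matches the paper's argument.
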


\begin{definition}
\label{definition:dinf}
For simple infinite points, we define a map
\[
 \col \colon \Moebius_{\infty} \setminus \{ \dinf \} \longrightarrow A, 
 \qquad
 (x:y:z:r:0) \mapsto (x:y:z) \,.
\]
\end{definition}

\begin{lemma}
\label{lemma:different_color_infty}
	If $p_1, p_2 \in \Moebius$ are simple infinite and $\col(p_1) \ne \col(p_2)$, then $d(p_1,p_2)=\infty$.
\end{lemma}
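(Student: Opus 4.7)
The plan is to unwind the formula defining $d$ on pairs of simple infinite points and reduce the claim to an elementary incidence statement on the plane conic $A$.

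First I would write $p_i = (x_i:y_i:z_i:r_i:0)$ for $i \in \{1,2\}$. Since $h_1 = h_2 = 0$, the product $h_1 h_2$ vanishes, and the two cross-terms of $\sprod{p_1}{p_2}$ that involve $r_i h_j$ disappear, so the bilinear form collapses to $\sprod{p_1}{p_2} = x_1 x_2 + y_1 y_2 + z_1 z_2$. Consequently $d(p_1, p_2) = (\sprod{p_1}{p_2} : 0)$, which equals $\infty$ precisely when the first entry is nonzero (otherwise $d$ is undefined, corresponding exactly to the case $p_2 \in \Tan_{p_1}\Moebius$). The lemma therefore reduces to the claim that, under the hypothesis $\col(p_1) \ne \col(p_2)$, the quantity $x_1 x_2 + y_1 y_2 + z_1 z_2$ is nonzero.

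Next I would observe that the projections $\col(p_i) = (x_i:y_i:z_i)$ are well-defined points of $\p^2$, since $p_i \ne \dinf$ forces $(x_i, y_i, z_i) \ne (0,0,0)$. They both lie on the smooth conic $A = \{x^2+y^2+z^2 = 0\}$, because the defining equation of $\Moebius$ restricted to $h=0$ is exactly $x^2+y^2+z^2 = 0$. The vanishing $x_1 x_2 + y_1 y_2 + z_1 z_2 = 0$ says precisely that $\col(p_2)$ lies on the projective tangent line to $A$ at $\col(p_1)$, the gradient of the conic's equation at $(x_1:y_1:z_1)$ being twice $(x_1,y_1,z_1)$.

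The final geometric fact I would invoke is that a tangent line to a smooth plane conic meets the conic only at the point of tangency, with intersection multiplicity two. Hence $\col(p_2) \in A$ together with $\col(p_2)$ lying on the tangent to $A$ at $\col(p_1)$ forces $\col(p_1) = \col(p_2)$, contradicting the hypothesis. I do not anticipate a real obstacle: once the orthogonality relation on the two isotropic vectors is identified with a tangent-line incidence on $A$, the conclusion is immediate from the classical geometry of a smooth conic (equivalently, one could note that the standard symmetric form on $\C^3$ has Witt index one, so no $2$-dimensional isotropic subspace exists).
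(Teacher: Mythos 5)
Your proposal is correct and follows essentially the same route as the paper: both reduce $d(p_1,p_2)=\infty$ to the nonvanishing of $x_1x_2+y_1y_2+z_1z_2$ (since $h_1=h_2=0$ kills the other terms of $\sprod{p_1}{p_2}$) and then rule out orthogonality of the two linearly independent isotropic vectors $(x_i,y_i,z_i)$. The only difference is cosmetic and lies in the last step: the paper argues by hand that the planes $B_1\cap B_2$ cut out a one-dimensional space which cannot contain both independent vectors, whereas you invoke the classical fact that a tangent line meets the smooth conic $A$ only at its point of tangency (equivalently, that the standard form on $\C^3$ has Witt index one) --- the same fact in projective clothing.
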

\begin{proof}
    Since $p_1,p_2 \in \Moebius_{\infty}$, either $d(p_1,p_2)$ is undefined, or $d(p_1,p_2)=\infty$, 
    and the latter happens precisely when $\sprod{p_1}{p_2} \neq 0$.
    Write $p_i=(x_i:y_i:z_i:1:0)$.
	Let $B_i := \{(x,y,z) \in \C^3 \, | \, x_i\,x + y_i\,y + z_i\,z = 0 \}$. 
	From $p_i \in \Moebius$ we get $x_i^2+y_i^2+z_i^2 = 0$, namely, $(x_i,y_i,z_i) \in B_i$.
	On the other hand, since $\col(p_1) \ne \col(p_2)$,
	we have that $(x_1,y_1,z_1)$ and $(x_2,y_2,z_2)$ are linearly independent.
	Therefore the intersection $B_1 \cap B_2$ is one-dimensional
	and it cannot happen that $(x_1,y_1,z_1)$ and $(x_2,y_2,z_2)$ are both in $B_1 \cap B_2$,
	because otherwise they would be linearly dependent. 
	However, the statements $(x_2,y_2,z_2) \notin B_1$ and $(x_1,y_1,z_1) \notin B_2$
	are equivalent, so none of the two points belong to~$B_1 \cap B_2$.
	Hence,
	\[
		\sprod{p_1}{p_2} = x_1x_2 + y_1y_2 + z_1z_2 \neq 0 \,.
	\]
	This proves the statement.
\end{proof}

\begin{lemma}
\label{lemma:same_color_equal}
	Let $q_1$ and $q_2$ be simple infinite points such that $\col(q_1)=\col(q_2)$.
	If $p$ is finite, and both $d(q_1,p)$ and $d(q_2,p)$ are not $\infty$,
	then $q_1=q_2$ and $d(q_i,p)$ is undefined.
\end{lemma}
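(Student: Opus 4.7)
The plan is to reduce the statement to a short coordinate computation, exploiting the fact that the hypothesis $\col(q_1)=\col(q_2)$ lets us align the first three coordinates of the two points.

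First I would pick coordinate representatives. Since $q_1,q_2$ are simple infinite and have the same image under $\col$, one can write $q_i=(x:y:z:r_i:0)$ with the same triple $(x,y,z)$, and the condition $q_i\in\Moebius$ forces $x^2+y^2+z^2=0$. Write the finite point as $p=(a:b:c:s:1)$.

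Next I would translate the hypothesis ``$d(q_i,p)\neq\infty$'' into a linear condition on $r_i$. By definition
\[
d(q_i,p)=\bigl(\sprod{q_i}{p}:h_{q_i}\cdot h_p\bigr)=\bigl(\sprod{q_i}{p}:0\bigr),
\]
so $d(q_i,p)=\infty$ exactly when $\sprod{q_i}{p}\neq0$, and $d(q_i,p)$ is undefined exactly when $\sprod{q_i}{p}=0$. Hence the assumption forces $\sprod{q_i}{p}=0$, and simultaneously delivers the ``$d(q_i,p)$ is undefined'' part of the conclusion. An immediate computation gives
\[
\sprod{q_i}{p}=xa+yb+zc-\tfrac{1}{2}r_i,
\]
so $\sprod{q_i}{p}=0$ is equivalent to $r_i=2(xa+yb+zc)$.

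Finally I would conclude. Since the right-hand side $2(xa+yb+zc)$ does not depend on $i$, we get $r_1=r_2$, and therefore $q_1=q_2$ as points of $\p^4$. Combined with the already-established fact that $\sprod{q_i}{p}=0$, this is exactly the claim.

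There is no real obstacle here; the only point to be a little careful about is the distinction between $d$ taking the value $\infty$ (numerator nonzero, denominator zero) and $d$ being undefined (both entries zero), which is precisely what allows us to read off both conclusions from the single equation $\sprod{q_i}{p}=0$.
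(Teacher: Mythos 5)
Your proposal is correct and follows essentially the same route as the paper's proof: translate ``$d(q_i,p)\neq\infty$'' into $\sprod{q_i}{p}=0$, solve for $r_i$ in terms of $p$, and conclude $r_1=r_2$. The only cosmetic difference is that you normalize the representatives so that $q_1,q_2$ share the same first three coordinates, whereas the paper carries the scale factor $\alpha$ and shows $r_2=\alpha r_1$.
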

\begin{proof}
	Write $q_1=(x_1:y_1:z_1:r_1:0)$. Then $q_2=(x_2:y_2:z_2:r_2:0)=(\alpha x_1:\alpha y_1:\alpha z_1:r_2:0)$
	for some $\alpha \in \C\setminus \{0\}$ by the assumption $\col(q_1)=\col(q_2)$.
	We want to show that $r_2 = \alpha r_1$.
	Let $p=(x:y:z:r:1)$. Then
	\[
		d(p,q_i)=(\sprod{p}{q_i} : 0) = (x_ix + y_iy + z_iz -\frac{1}{2}r_i : 0)\,.
	\]
	Since both the values $d(p,q_i)$ are not $\infty$ by assumption, they are both undefined.
	Hence, $r_i=2(x_ix + y_iy + z_iz)$ and we get
	\[
		r_2=2(x_2x + y_2y + z_2z) = 2\alpha (x_1x + y_1y + z_1z) = \alpha r_1\,,
	\]
	which shows that $q_1=q_2$.
\end{proof}

We conclude this section with a key technical result, 
showing that the function~$d$, once restricted to particular linear subsets,
behaves like a 1-dimensional distance function.

\begin{definition}
	\label{definition:finp}
	If $p$ is simple infinite, we define~$\Fin_p$ to be
	the set of all finite points~$p'$ such that $d(p,p') \neq \infty$,
	namely $d(p,p')$ is undefined.
	Hence $\Fin_p = \Moebius \cap \Tan_{p} \Moebius \cap \{h \neq 0\}$.
\end{definition}

\begin{lemma}
\label{lemma:restriction_distance}
	Let $p$ be a simple infinite point.
    There exists a function $\pi \colon \Fin_p \longrightarrow \C$ such that
	for all $q_1,q_2 \in \Fin_p$, we have 
	\[
	  d(q_1, q_2) = \Bigl( \bigl( \pi(q_1)-\pi(q_2) \bigr)^2 : 1 \Bigr) \,.
    \]
\end{lemma}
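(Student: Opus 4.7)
The plan is to describe $\Fin_p$ explicitly as an affine $2$-plane in~$\C^3$ and to exploit the fact that $p$ points in an isotropic direction to show that the complex squared Euclidean distance, restricted to this plane, is a perfect square up to a nonzero constant. Write $p = (x_0 : y_0 : z_0 : r_0 : 0)$: since $p \in \Moebius_\infty \setminus \{\dinf\}$ we have $(x_0, y_0, z_0) \ne (0,0,0)$ and $x_0^2 + y_0^2 + z_0^2 = 0$. Using the equation of $\Tan_p \Moebius$ recalled just before the lemma, I would identify $\Fin_p$, via the conformal embedding, with the affine plane $P := \{(x,y,z) \in \C^3 \mid 2(x_0 x + y_0 y + z_0 z) = r_0\}$, whose underlying linear subspace is $B_0 := \{(x,y,z) \in \C^3 \mid x_0 x + y_0 y + z_0 z = 0\}$. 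By \Cref{remark:complex_distance}, for $q_1, q_2 \in \Fin_p$ with affine coordinates $u_i$, one has $d(q_1, q_2) = \bigl(-\tfrac{1}{2}\,Q(u_1 - u_2) : 1\bigr)$ with $Q(v) := v_1^2 + v_2^2 + v_3^2$ and $u_1 - u_2 \in B_0$. So the lemma reduces to showing $Q|_{B_0} = -2\,\ell^2$ for some linear form~$\ell$ on $B_0$.

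The key algebraic observation is that $(x_0, y_0, z_0)$ is simultaneously $Q$-isotropic and, by the definition of $B_0$, orthogonal to $B_0$ with respect to the symmetric bilinear form polarizing~$Q$; hence it lies in the radical of $Q|_{B_0}$. Since $\dim B_0 = 2$, this forces $Q|_{B_0}$ to have rank at most~$1$, so it is a scalar multiple of the square of a linear form. To exhibit $\pi$ explicitly I would assume without loss of generality (up to a permutation of the coordinates $x,y,z$) that $x_0 \ne 0$, solve the defining equation of $B_0$ for~$x$, substitute into~$Q$, and use $x_0^2 + y_0^2 + z_0^2 = 0$ to cancel the cross terms; the result factors as $-\bigl((z_0 y - y_0 z)/x_0\bigr)^2$. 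Setting $\pi(q) := (z_0 y - y_0 z)/(x_0 \sqrt{2})$ for $q = (x:y:z:r:1) \in \Fin_p$ then gives the claimed identity.

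There is no serious obstacle in this argument: the rank count pins down the qualitative shape of the answer, and the only mild annoyance is the case split on which coordinate of $(x_0, y_0, z_0)$ is nonzero, which is cosmetic and produces symmetric formulas with the same structure.
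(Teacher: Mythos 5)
Your proposal is correct and follows essentially the same route as the paper: identify $\Fin_p$ with the affine plane $2(x_0x+y_0y+z_0z)=r_0$, eliminate one coordinate using $x_0\neq 0$, and use the isotropy relation $x_0^2+y_0^2+z_0^2=0$ to make the restricted squared distance collapse to $-\bigl((z_0\,\Delta y-y_0\,\Delta z)/x_0\bigr)^2$, yielding exactly the paper's $\pi(x:y:z:r:1)=\tfrac{1}{\sqrt{2}\,x_0}(z_0y-y_0z)$. Your preliminary radical/rank-one observation is a pleasant conceptual explanation of why a perfect square must appear, but the substance of the verification coincides with the paper's computation.
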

\begin{proof}
 Write $p = (\tilde{x}: \tilde{y}: \tilde{z}: \tilde{r}: 0)$. 
 By possibly re-labeling the coordinates, we can suppose that $\tilde{x} \neq 0$,
 since $p$ is different from $\dinf = (0:0:0:1:0)$.
 The embedded tangent space~$\Tan_{p} \Moebius$ has equation
 \[
  2\tilde{x} \, x + 2\tilde{y} \, y + 2\tilde{z} \, z - \tilde{r} \, h = 0 \,.
 \]
 Therefore for any $q_1, q_2 \in \Fin_p$ we get, once we write $q_i = (x_i: y_i: z_i: r_i: 1)$
 \[
  \tilde{r} = 2\tilde{x} \, x_1 + 2\tilde{y} \, y_1 + 2\tilde{z} \, z_1 = 
  2\tilde{x} \, x_2  + 2\tilde{y} \, y_2 + 2\tilde{z} \, z_2 \,.
 \]
 Since we suppose that $\tilde{x} \neq 0$, we can write
 \[
  x_i = \frac{\tilde{r}}{2\tilde{x}} - \frac{\tilde{y}}{\tilde{x}} y_i - \frac{\tilde{z}}{\tilde{x}} z_i \,.
 \]
 \Cref{remark:complex_distance} shows that
 \begin{align*}
  -2 \sprod{q_1}{q_2} &= (x_1 - x_2)^2 + (y_1 - y_2)^2 + (z_1 - z_2)^2 \\
   &= \left(- \frac{\tilde{y}}{\tilde{x}} y_1 - \frac{\tilde{z}}{\tilde{x}} z_1 + \frac{\tilde{y}}{\tilde{x}} y_2 + \frac{\tilde{z}}{\tilde{x}} z_2\right)^2 + (y_1 - y_2)^2 + (z_1 - z_2)^2 \\
   &= \frac{1}{\tilde{x}^2} \left[ \bigl( \tilde{y}(y_2 - y_1) + \tilde{z}(z_2 - z_1) \bigr)^2 + \tilde{x}^2 (y_1 - y_2)^2 + \tilde{x}^2 (z_1 - z_2)^2 \right] \\
   &= \frac{1}{\tilde{x}^2} \left[ 2 \tilde{y} \tilde{z} (y_2 - y_1)(z_2 - z_1) + (\tilde{x}^2 + \tilde{y}^2)(y_1 - y_2)^2 + (\tilde{x}^2 + \tilde{z}^2)(z_1 - z_2)^2 \right] \\
   &= -\frac{1}{\tilde{x}^2} \left[ \tilde{z}(y_1 - y_2) - \tilde{y}(z_1 - z_2) \right]^2 ,
 \end{align*}
 where in the last step we use that $\tilde{x}^2 + \tilde{y}^2 + \tilde{z}^2 =0$. 
 Define the function $\pi \colon \Fin_p \longrightarrow \C$ by 
 \[
  (x:y:z:r:1) \mapsto \frac{1}{\sqrt{2}\tilde{x}}(\tilde{z} y - \tilde{y}z) \,.
 \]
 Notice that $\pi$ does not depend on the chosen representative of~$p$.
 The statement then follows, since
 \[
  d(q_1, q_2) = \bigl( \sprod{q_1}{q_2} : 1 \bigr) =  \Bigl( \bigl( \pi(q_1)-\pi(q_2) \bigr)^2 : 1 \Bigr) \,. \qedhere
 \]
\end{proof}

\section{Colorings and zero-sum cycles}
\label{result}

This section is devoted to the proof of the main result of the paper (\Cref{theorem:main}).
Let us first formalize the notion of flexibility.
Recall that a \emph{triangular polyhedron} is 
a finite two-dimensional abstract simplicial complex 
such that every edge belongs to exactly two faces.
The \emph{1-skeleton} of a triangular polyhedron is the graph defined by
the vertices and edges of the polyhedron.
We consider only polyhedra whose 1-skeleton is connected.

\begin{definition}
\label{definition:realization}
	A \emph{realization} of a triangular polyhedron whose 1-skeleton is $G=(V,E)$
	is a map $\rho \colon V \longrightarrow \R^3$ such that $\rho(u) \neq \rho(v)$
	for every $\{u, v\} \in E$.
	The realization $\rho$ induces \emph{edge lengths} $\lambda = (\lambda_e)_{e \in E}$ where 
	$\lambda_{\{u,v\}} := \left\| \rho(u) - \rho(v) \right\| \in \R_{>0}$ for $\{u,v\} \in E$.
	Here $\left\| \cdot \right\|$ is the standard Euclidean norm.
	Two realizations~$\rho_1$ and~$\rho_2$ are called \emph{congruent} 
	if there exists an isometry~$\sigma$ of~$\R^3$ such that $\rho_1 = \sigma \circ \rho_2$.

	 A \emph{flex} of the graph $G$ with a realization~$\rho$ is a continuous map
	 $f \colon [0, 1) \longrightarrow (\R^3)^V$ such that 
	 \begin{itemize}
	 \item
	  $f(0)$ is the given realization~$\rho$;
	 \item
	  for any $t \in [0,1)$, the realizations $f(t)$ and $f(0)$ induce the same edge lengths;
	 \item
	  for any two distinct $t_1, t_2 \in [0,1)$, the realizations~$f(t_1)$ and~$f(t_2)$ are not congruent.
	 \end{itemize}
\end{definition}

To prove the main result, the first step is to convert the information on the existence of a flex
for a triangular polyhedron into a combinatorial object, 
namely a coloring of the vertices of the polyhedron (\Cref{definition:coloring}).
This is done by considering special ``limits'' of the images of realizations in the flex under the conformal embedding,
which exist due to the fact that~$\Moebius$ is compact (\Cref{definition:limit_realization}).
These limits are not realizations, because some of the vertices
are sent to points in~$\Moebius$ that do not correspond to points in~$\R^3$;
these points are, actually, not even given by real coordinates. 
The properties of this coloring follow from the results of \Cref{compactification}.
Then, by arguing purely combinatorially we derive the existence
of a monochromatic cycle in the 1-skeleton of the polyhedron that passes 
through the edge whose dihedral angle is supposed to change along the flex
(\Cref{lemma:cycle}).
At this point, we notice that the vertices in this cycle
satisfy the hypotheses of \Cref{lemma:restriction_distance} (\Cref{lemma:red}). 
Then the function~$d$ behaves like a 1-dimensional distance function,
for which the main result is thus trivial.

We start by precisely stating our main result.

\begin{theorem}
\label{theorem:main}
    Let $G$ be the 1-skeleton of a triangular polyhedron with a realization that admits a flex.
    Let $\lambda$ be the edge lengths induced by the realizations in the flex.
	Let $\{ \vertOne, \vertTwo \}$ be an edge of~$G$ and let $\South$ and $\North$ be the two opposite vertices of the two triangles containing~$\{ \vertOne, \vertTwo \}$.
	If the dihedral angle
	between the faces $\{\vertOne,\vertTwo,\South\}$ and $\{\vertOne,\vertTwo,\North\}$ is not constant along the flex,
	then there is an induced\footnote{Given a subset~$V'$ of the vertices~$V$ of a graph~$G$, the \emph{induced subgraph determined by~$V'$} is the subgraph of~$G$ with vertices~$V'$ and all edges of~$G$ having both vertices in~$V'$. A subgraph is called \emph{induced} if it is the induced subgraph determined by a subset of vertices.} cycle in $G$ containing~$\{ \vertOne, \vertTwo \}$ but neither the vertex~$\South$ nor~$\North$ and
	there is a sign assignment such that the signed sum of the edge lengths~$\lambda_e$ over the edges~$e$ in the cycle is zero.
\end{theorem}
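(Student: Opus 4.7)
The plan is to execute the degeneration strategy sketched in the introduction: produce a limit of the flex inside the compact M\"obius quadric $\Moebius$, read off a vertex coloring of $G$, use this coloring together with \Cref{lemma:distance_finite_dinf,lemma:different_color_infty,lemma:same_color_equal} to extract an induced cycle through $\{\vertOne,\vertTwo\}$ avoiding $\South$ and $\North$, and finally invoke \Cref{lemma:restriction_distance} on this cycle to reduce to the one-dimensional telescoping argument from the introduction.

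First I compose the flex $f$ with the conformal embedding to get a family $\tilde\rho_t\colon V\to\Moebius$. Since $(\p^4)^V$ is a complete complex variety and the edge constraint $\sprod{\tilde\rho_t(u)}{\tilde\rho_t(v)} = -\tfrac{1}{2}\lambda_{\{u,v\}}^2\,h_u h_v$ is closed, after complexifying the curve traced by the flex in the configuration variety I let a parameter approach a suitable boundary point and extract a limit realization $\bar\rho\colon V\to\Moebius$, possibly with complex coordinates. In this limit the identity $d(\bar\rho(u),\bar\rho(v))=\lambda_{\{u,v\}}^2$ either holds as an equation of rational functions, or $(\bar\rho(u),\bar\rho(v))$ lies in the indeterminacy locus of $d$, forcing $\bar\rho(u)\in\Tan_{\bar\rho(v)}\Moebius$ and $\bar\rho(v)\in\Tan_{\bar\rho(u)}\Moebius$. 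The degeneration must be chosen so that $\bar\rho(\vertOne)$ and $\bar\rho(\vertTwo)$ are simple infinite while $\bar\rho(\South),\bar\rho(\North)\notin\Tan_{\bar\rho(\vertOne)}\Moebius$; this is possible precisely because the dihedral angle along $\{\vertOne,\vertTwo\}$ is non-constant, giving a genuine algebraic direction in the flex curve along which the two faces $\{\vertOne,\vertTwo,\South\}$ and $\{\vertOne,\vertTwo,\North\}$ decouple at infinity.

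From such a limit I extract a coloring: finite vertices receive a common ``finite'' label, $\dinf$ gets its own label, and every simple infinite vertex $v$ is labelled by $\col(\bar\rho(v))\in A$. The edge-preservation constraint, combined with \Cref{lemma:distance_finite_dinf,lemma:different_color_infty}, rules out edges between $\dinf$ and finite vertices and edges between simple infinite vertices of different colors, while \Cref{lemma:same_color_equal} implies that any finite vertex is adjacent to at most one simple infinite vertex per color class. Setting $\alpha:=\col(\bar\rho(\vertOne))=\col(\bar\rho(\vertTwo))$, let $H$ be the induced subgraph of $G$ on the vertex set $V_\alpha:=\{v\in V\,|\,\bar\rho(v)\in\Tan_{\bar\rho(\vertOne)}\Moebius\}$. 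Then $\vertOne,\vertTwo\in V_\alpha$ while $\South,\North\notin V_\alpha$ by construction. The combinatorial core is to show that $H$ contains an induced cycle through $\{\vertOne,\vertTwo\}$: in a triangular polyhedron whose every edge lies in exactly two faces, the boundary of the subcomplex spanned by $V_\alpha$ produces closed walks through any edge internal to it, and the rigidity dictated by \Cref{lemma:same_color_equal} forbids chords inside $V_\alpha$, so a suitable walk through $\{\vertOne,\vertTwo\}$ is induced.

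To conclude, \Cref{lemma:restriction_distance} applied at the simple infinite point $\bar\rho(\vertOne)$ provides a function $\pi\colon\Fin_{\bar\rho(\vertOne)}\to\C$, extended by continuity to $\vertOne$ and $\vertTwo$ themselves, such that for every edge $e=\{u,v\}$ of the cycle one has $\bigl(\pi(\bar\rho(u))-\pi(\bar\rho(v))\bigr)^2=-\tfrac{1}{2}\lambda_e^2$. Orienting the cycle and choosing signs $\epsilon_e\in\{\pm 1\}$ so that $\pi(\bar\rho(u))-\pi(\bar\rho(v))=\epsilon_e\cdot(i/\sqrt{2})\,\lambda_e$, the sum $\sum_e\epsilon_e\lambda_e$ equals $(\sqrt{2}/i)$ times the telescoping sum of $\pi$-differences around the cycle, which vanishes. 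I expect the second step --- producing a limit realization with the precise separation of $\vertOne,\vertTwo$ from $\South,\North$ prescribed by the change of dihedral angle --- together with the combinatorial extraction of an \emph{induced} cycle from $H$ to constitute the main obstacles; the rest is essentially the content of \Cref{compactification} plus the one-line telescoping trick from the introduction.
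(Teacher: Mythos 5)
Your overall template (compactify, degenerate, color, extract a cycle, telescope) is the paper's, but the specific degeneration you prescribe is self-contradictory, and this is not merely the ``main obstacle'' you postponed. Since $\South$ is adjacent to $\vertOne$ in $G$, the closed edge equation you yourself invoke, $\sprod{\rho(\vertOne)}{\rho(\South)} = -\tfrac{1}{2}\lambda_{\{\vertOne,\South\}}^2\, h_{\vertOne} h_{\South}$, survives to any limit; as soon as $\bar\rho(\vertOne)\in\Moebius_{\infty}$ (i.e.\ $h_{\vertOne}=0$) it forces $\sprod{\bar\rho(\vertOne)}{\bar\rho(\South)}=0$, that is $\bar\rho(\South)\in\Tan_{\bar\rho(\vertOne)}\Moebius$, and likewise for $\North$. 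So a limit with $\bar\rho(\vertOne),\bar\rho(\vertTwo)$ simple infinite and $\bar\rho(\South),\bar\rho(\North)\notin\Tan_{\bar\rho(\vertOne)}\Moebius$ does not exist, whatever the dihedral-angle hypothesis. The paper does the mirror image: it pins the triangle $\{\vertOne,\vertTwo,\North\}$ (the slice $Z$), so these three vertices stay finite, and uses that the non-constant dihedral angle makes the projection of the resulting projective variety $\variety$ onto the $\South$-factor positive-dimensional, hence forced to meet $\{h=0\}$; it is $\South$ that goes to infinity, and this argument simultaneously settles the existence of the limit, which your sketch leaves open.

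Even granting some limit with $\vertOne,\vertTwo$ at infinity, your last step fails: \Cref{lemma:restriction_distance} defines $\pi$ only on $\Fin_{\bar\rho(\vertOne)}$, i.e.\ on \emph{finite} points of the tangent section, and proves $d(q_1,q_2)=\bigl((\pi(q_1)-\pi(q_2))^2:1\bigr)$ only there. For the edge $\{\vertOne,\vertTwo\}$ both endpoints would be infinite, $d$ is undefined at that pair, and no ``extension by continuity'' yields $\bigl(\pi(\bar\rho(\vertOne))-\pi(\bar\rho(\vertTwo))\bigr)^2=-\tfrac{1}{2}\lambda_{\{\vertOne,\vertTwo\}}^2$; so exactly the edge your cycle must contain has no identity tying its length to $\pi$-differences, and the telescoping sum does not close. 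In the paper the roles are reversed: the cycle consists of red (finite) vertices lying in $\Fin_{\rho_{\infty}(\South)}$ (\Cref{lemma:red}), and \Cref{lemma:restriction_distance} is applied at the infinite point $\rho_{\infty}(\South)$, covering every edge including $\{\vertOne,\vertTwo\}$. Finally, your combinatorial extraction (``boundary of the subcomplex spanned by $V_\alpha$ \dots forbids chords \dots so a suitable walk is induced'') is asserted, not proved: the paper obtains a closed walk from the $2$-regularity of the auxiliary graph $G_T$ of bichromatic triangles (via \Cref{lemma:no3}), gets inducedness by taking a length-minimal cycle through $\{\vertOne,\vertTwo\}$, and rules out $\North$ by a separate collinearity argument using the non-constant dihedral angle; none of these points is supplied by your sketch, and \Cref{lemma:same_color_equal} does not forbid chords among finite vertices.
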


Let us start laying the foundation of the proof.
For the rest of this section, we fix the skeleton~$G = (V,E)$, the flex,
the edge~$\{ \vertOne, \vertTwo \}$, the edge lengths~$\lambda$ and
the two vertices~$\South$ and~$\North$ satisfying the assumptions of \Cref{theorem:main}.
Notice that the realizations in the flex of the two triangles $\{\vertOne,\vertTwo,\South\}$ and $\{\vertOne,\vertTwo,\North\}$ are non-degenerate
-- i.e., the vertices are not collinear ---
because otherwise their dihedral angle is not defined. 

The Zariski closure in~$(\C^3)^V$ of the set of real realizations inducing~$\lambda$ is an algebraic set
and we denote it by~$W$. 
By construction, $W$ is the set of maps $\rho \colon V \longrightarrow \C^3$ 
such that for all $\{v_1, v_2\} \in E$ with $\rho(v_i) = (x_i, y_i, z_i)$ we have
\begin{equation}
\label{equation:W}
\tag{$\ast$}
 (x_1 - x_2)^2 + (y_1 - y_2)^2 + (z_1 - z_2)^2 = \lambda^2_{\{v_1, v_2\}} \,.
\end{equation}
Notice that, as soon as a real realization~$\rho$ belongs to~$W$,
then all real realizations congruent to~$\rho$ belong to~$W$.
To effectively use the hypothesis of having a flex, 
we need to get rid of this abundance of ``copies'' of a single realization.
To do so, we take a ``slice'' of~$W$, which has the effect to kill
the action of the group of isometries by fixing a triangle.
Let $\rho_0$ be the realization of the flex at time~$0$.
Consider the subset
\[
 Z := 
 \bigl\{
  \rho \in W \, | \,
  \rho(\vertOne) = \rho_0(\vertOne), \;
  \rho(\vertTwo) = \rho_0(\vertTwo), \;
  \rho(\North) = \rho_0(\North)
 \bigr\}
 \,. 
\]
By construction, no two different elements of~$Z$ coming from real realizations may be congruent by a direct isometry;
they may be congruent by the reflection along the plane of the triangle with vertices~$\rho_0(\vertOne)$, $\rho_0(\vertTwo)$, and~$\rho_0(\North)$, 
but this does not influence our argument.
Now, the realizations in the flex may not be elements of~$Z$, 
but for each realization in the flex there is a congruent realization in~$Z$.
The image of~$Z$ via the product of conformal embeddings $\C^3 \hookrightarrow \Moebius$ is a subset of~$\Moebius^V$.

\begin{definition}
\label{definition:closure}
	Let $\variety$ be the Zariski closure in~$M^V$ of the image of~$Z$ 
	under the product of conformal embeddings $\C^3 \hookrightarrow \Moebius$. 
\end{definition}

Notice that the elements of~$\variety$ are maps 
$\rho \colon V \longrightarrow \Moebius$ which need not to correspond to realizations,
since the image of some vertices may lie on~$\Moebius_{\infty}$ or may have complex coordinates.
Consider the projection~$\variety_{\South}$ of~$\variety$ on the copy of~$\Moebius$ indexed by the vertex~$\South$.
By the assumption that the dihedral angle at $\{ \vertOne, \vertTwo \}$ changes during the flex, 
we get that $\variety_{\South}$ contains infinitely many points.
Hence $\variety_{\South}$ must intersect the hyperplane section $\{h=0\} \cap \Moebius = \Moebius_{\infty}$,
since it is a positive-dimensional projective subvariety of~$M$.

\begin{definition}
\label{definition:limit_realization}
From the previous discussion we know that we can 
pick an element~$\rho_{\infty}$ in~$\variety$ such that $\rho_{\infty}(\South) \in \Moebius_{\infty}$.
We fix such an element for the rest of the section.
\end{definition}

Notice that, by construction of~$Z$, 
all three points~$\rho_{\infty}(\vertOne)$, $\rho_{\infty}(\vertTwo)$, and~$\rho_{\infty}(\North)$ are finite.
The latter property is crucial for our argument;
if we just wanted to achieve $\rho_{\infty}(\South) \in \Moebius_{\infty}$
we could have used the set~$W$ without the need of introducing~$Z$ 
but this would not be enough to prove the main result of the paper.

\begin{lemma}
	\label{lemma:adjacentVertices}
	If $v$ and $w$ are adjacent vertices in~$G$, 
	then $d\bigl(\rho_{\infty}(v), \rho_{\infty}(w)\bigr)$ is either finite or undefined.
\end{lemma}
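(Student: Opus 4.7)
The plan is to establish the identity
\[
2\sprod{\rho(v)}{\rho(w)} + \lambda_{\{v,w\}}^2 \, h_v h_w \;=\; 0
\]
for every $\rho \in \variety$, viewed as a bi-homogeneous condition on the two $\p^4$-factors indexed by $v$ and $w$, and then read off from it that $d(\rho_\infty(v),\rho_\infty(w))$ can only be finite or undefined, never $\infty$.

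First I would observe that every $\rho \in Z$, viewed under the product of conformal embeddings with the normalization $h = 1$, satisfies $\sprod{\rho(v)}{\rho(w)} = -\tfrac{1}{2}\lambda_{\{v,w\}}^2$ by \Cref{remark:complex_distance}; this is just the edge-length equation \eqref{equation:W} rewritten via the bilinear form. The polynomial
\[
F(p_v,p_w) := 2\sprod{p_v}{p_w} + \lambda_{\{v,w\}}^2 \, h_v h_w
\]
is bilinear in the two sets of homogeneous coordinates, so $F = 0$ cuts out a well-defined closed subset of $\Moebius^V$ (it only constrains the $v$- and $w$-factors). Since $F = 0$ on the image of $Z$, it vanishes on the Zariski closure $\variety$ as well, and in particular $F\bigl(\rho_\infty(v),\rho_\infty(w)\bigr) = 0$.

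Writing $\rho_\infty(v) = (x_v:y_v:z_v:r_v:h_v)$ and analogously for $w$, the identity reads $\sprod{\rho_\infty(v)}{\rho_\infty(w)} = -\tfrac{1}{2}\lambda_{\{v,w\}}^2 h_v h_w$, so by the very definition of $d$,
\[
d\bigl(\rho_\infty(v),\rho_\infty(w)\bigr) = \bigl(-\tfrac{1}{2}\lambda_{\{v,w\}}^2 \, h_v h_w \,:\, h_v h_w\bigr).
\]
If $h_v h_w \neq 0$, this equals $(-\tfrac{1}{2}\lambda_{\{v,w\}}^2 : 1)$, a finite point of $\p^1$. If $h_v h_w = 0$, then both entries vanish simultaneously and $d$ is undefined. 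Either way $d \neq \infty$, as claimed. The only mildly delicate point is verifying that $F$ really defines a closed condition on the projective factors; this is automatic, since $\sprod{\cdot}{\cdot}$ and $h_v h_w$ are both bi-linear in the homogeneous coordinates. Once that is in place, the statement reduces to the definition of $d$.
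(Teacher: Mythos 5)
Your proof is correct and follows essentially the same route as the paper: both pass the edge-length relation from the dense image of $Z$ to the Zariski closure $\variety$ and then read off that $d\bigl(\rho_{\infty}(v),\rho_{\infty}(w)\bigr)$ is either the finite constant $(-\tfrac{1}{2}\lambda_{\{v,w\}}^2:1)$ or undefined. The only difference is packaging: the paper argues that the rational map $d\bigl(\cdot(v),\cdot(w)\bigr)$ is constant on $\variety$ wherever it is defined because it is constant on a dense subset, whereas you homogenize and push the closed bilinear condition $2\sprod{p_v}{p_w}+\lambda_{\{v,w\}}^2\,h_v h_w=0$ to the closure, which is an equivalent (and, if anything, slightly cleaner) way of making the same limiting argument, since it avoids discussing the locus where the rational map is defined.
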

\begin{proof}
  The map~$d \bigl(\cdot(v), \cdot(w)\bigr) \colon \variety \dashrightarrow \p^1$ is a rational map
	that is constant on the dense subset of~$\variety$ of finite real points.
	In fact, these points are indeed realizations and since $v$ and $w$ are adjacent their distance in~$\R^3$ is constant, 
	which implies, due to \Cref{proposition:distance}, that $d$ is constant on them.
	The constant equals $(-\frac{1}{2}\lambda_{\{v,w\}}^2:1)$.
	Since $d(\cdot(v), \cdot(w))$ is a rational map, it is constant on~$\variety$ wherever it is defined,
	and as we showed this constant is different from~$\infty$.
\end{proof}

From \Cref{lemma:adjacentVertices} we obtain that $\rho_{\infty}(\South)$ cannot be~$\dinf$,
because otherwise we would have $d(\rho_{\infty}(\South), \rho_{\infty}(\vertOne)) = \infty$ due to \Cref{lemma:distance_finite_dinf}.

\begin{definition}
\label{definition:coloring}
We color the vertices of~$G$ with three colors:
\[
	\text{a vertex } v \in V \text { is }
	\begin{cases}
		\text{red}   & \text{if $\rho_{\infty}(v)$ is finite}, \\
		\text{blue}  & \text{if $\rho_{\infty}(v)$ is simple infinite and } \col\bigl(\rho_{\infty}(v)\bigr)=\col\bigl(\rho_{\infty}(\South)\bigr)\,,\\
		\text{gold} & \text{otherwise}.
	\end{cases} 
\]
\end{definition}

\begin{lemma}
	\label{lemma:no3}
	A gold vertex cannot be adjacent to a blue and a red vertex simultaneously.
 	In particular, there is no triangle with three colors.
\end{lemma}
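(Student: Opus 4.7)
The plan is to argue by contradiction, relying entirely on \Cref{lemma:adjacentVertices} together with the two structural results \Cref{lemma:distance_finite_dinf} and \Cref{lemma:different_color_infty}. Suppose a gold vertex $v$ is adjacent both to a red vertex $v_r$ and to a blue vertex $v_b$. By \Cref{lemma:adjacentVertices}, neither $d\bigl(\rho_{\infty}(v),\rho_{\infty}(v_r)\bigr)$ nor $d\bigl(\rho_{\infty}(v),\rho_{\infty}(v_b)\bigr)$ equals~$\infty$; this is the constraint I want to violate.

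Next I would unpack what ``gold'' means according to \Cref{definition:coloring}: since $\rho_{\infty}(v)$ is not finite and is not a simple infinite point having the same $\col$-image as~$\rho_{\infty}(\South)$, there are exactly two possibilities, giving a short case analysis. In the first case $\rho_{\infty}(v) = \dinf$; because $\rho_{\infty}(v_r)$ is finite by the definition of red, \Cref{lemma:distance_finite_dinf} forces $d\bigl(\rho_{\infty}(v),\rho_{\infty}(v_r)\bigr) = \infty$, contradicting \Cref{lemma:adjacentVertices}. In the second case $\rho_{\infty}(v)$ is simple infinite with $\col\bigl(\rho_{\infty}(v)\bigr) \neq \col\bigl(\rho_{\infty}(\South)\bigr)$; since $v_b$ is blue, $\col\bigl(\rho_{\infty}(v_b)\bigr) = \col\bigl(\rho_{\infty}(\South)\bigr)$, so $\col\bigl(\rho_{\infty}(v)\bigr) \neq \col\bigl(\rho_{\infty}(v_b)\bigr)$, and \Cref{lemma:different_color_infty} gives $d\bigl(\rho_{\infty}(v),\rho_{\infty}(v_b)\bigr) = \infty$, again contradicting \Cref{lemma:adjacentVertices}.

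For the ``in particular'' part, I would observe that in any triangle of~$G$ bearing all three colors, the unique gold vertex is by definition adjacent to the two remaining vertices of the triangle, one of which is red and the other blue; the first part of the lemma rules this out immediately.

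There is no real obstacle here: the heavy lifting was done in \Cref{compactification} (where the two possible obstructions to $d$ being finite were identified) and in \Cref{lemma:adjacentVertices} (where the rigidity of edge lengths across~$\variety$ was transferred to the limit realization). The only thing to be careful about is to exhaust both subcases of ``gold'' so that the contradiction is complete regardless of whether $\rho_{\infty}(v)$ is~$\dinf$ or a simple infinite point of the wrong color.
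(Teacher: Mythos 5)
Your proposal is correct and follows the paper's own argument exactly: the same case split on whether the gold vertex maps to $\dinf$ or to a simple infinite point of a different color, using \Cref{lemma:adjacentVertices} together with \Cref{lemma:distance_finite_dinf} and \Cref{lemma:different_color_infty} to reach the contradiction, and the same observation for the ``in particular'' clause. Nothing is missing.
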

\begin{proof}
	For a contradiction, let $\{u,w\}$ and $\{v,w\}$ be edges such that $u$ is red, $v$ is blue, and $w$ is gold.
	There are two cases:
	\begin{itemize}
	  \item $\rho_{\infty}(w) = \dinf$: since $\rho_{\infty}(u)$ is finite and $u$ is adjacent to $w$,
	  	by \Cref{lemma:adjacentVertices} and \Cref{lemma:distance_finite_dinf} we get a contradiction.
	  \item $\rho_{\infty}(w)$ is simple infinite and $\col\bigl(\rho_{\infty}(w)\bigr) \neq \col\bigl(\rho_{\infty}(\South)\bigr)$:
        however by assumption we have $ \col\bigl(\rho_{\infty}(\South)\bigr) = \col\bigl(\rho_{\infty}(v)\bigr)$,
        so $\col\bigl(\rho_{\infty}(w)\bigr) \neq \col\bigl(\rho_{\infty}(v)\bigr)$, and 
        this contradicts \Cref{lemma:adjacentVertices} and \Cref{lemma:different_color_infty}, since $v$ and $w$ are adjacent. \qedhere
	\end{itemize}
\end{proof}

Let $T$ be the set of triangles of~$G$ such that the vertices are only red and blue, and both colors occur.
Note that the triangle $\{ \vertOne, \vertTwo, \South \}$ belongs to~$T$ 
since $\vertOne, \vertTwo$ are red and~$\South$ is blue by construction;
instead, the triangle $\{ \vertOne, \vertTwo, \North \}$ does not belong to~$T$,
since all its vertices are red by construction.
We define a graph~$G_T$ with vertex set~$T$, and two triangles are adjacent
if they share an edge whose vertices have different colors (see Figure~\ref{figure:walk}).  
By \Cref{lemma:no3} and the fact that every edge is in exactly two triangles,
all vertices of~$G_T$ have degree two. Hence, the graph~$G_T$ is a disjoint union of cycles.
 
Let $\cycle$ be the cycle in $G_T$ containing the triangle $\{ \vertOne, \vertTwo, \South\}$.
Let $T_\cycle$ be the vertices of~$\cycle$ (i.\,e., triangles) 
and let $V' := \bigcup_{t \in T_\cycle} t$ be the set of vertices of~$G$ appearing in the triangles of $\cycle$.
The subgraph of~$G$ induced by the red vertices in~$V'$
defines a closed walk\footnote{A \emph{walk} is a finite sequence of vertices such that consecutive vertices are adjacent (i.\,e., edges or vertices might repeat). \emph{Closed} means that the first and last vertex are the same.} in~$G$, namely,
a triangle in~$\cycle$ contributes to the walk by an edge if it has two red vertices,
and just by a vertex otherwise.
The edge~$\{\vertOne,\vertTwo\}$ is contained in the walk.
\begin{definition}
	\label{definition:walks}
	The walk above is called the \emph{red walk (in $G$ obtained via $\rho_\infty$)}.
	Similarly, the walk obtained using blue vertices is called the \emph{blue walk}.
	Figure~\ref{figure:walk} portraits the situation. 
\end{definition}

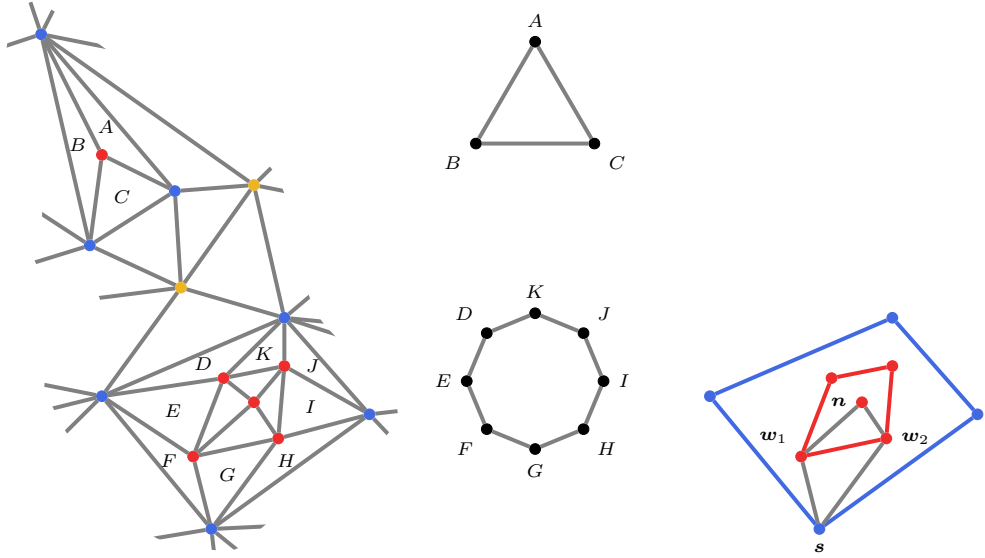
\begin{figure}[ht]
  \centering
  \begin{tikzpicture}[]
    \begin{scope}[scale=0.8,yscale=-1]
			\node[vertex,colB] (1) at (0,0) {};
			\node[vertex,colR] (2) at (1,2) {};
			\node[vertex,colB] (3) at (2.2,2.6) {};
			\node[vertex,colB] (4) at (0.8,3.5) {};
			\node[vertex,colG] (5) at (2.3,4.2) {};
			\node[vertex,colG] (6) at (3.5,2.5) {};
			\node[vertex,colB] (7) at (4,4.7) {};
			\node[vertex,colB] (8) at (1,6) {};
			\node[vertex,colR] (9) at (3,5.7) {};
			\node[vertex,colR] (10) at (4,5.5) {};
			\node[vertex,colR] (11) at (2.5,7) {};
			\node[vertex,colB] (12) at (2.8,8.2) {};
			\node[vertex,colR] (13) at (3.5,6.1) {};
			\node[vertex,colR] (14) at (3.9,6.7) {};
			\node[vertex,colB] (15) at (5.4,6.3) {};
			\coordinate (a1) at (3,10);
			\coordinate (a14) at (5,10);
			\coordinate (a3) at (7,8);
			\coordinate (a4) at (8,6);
			\coordinate (a5) at (7,5);
			\coordinate (a6) at (6,3);
			\coordinate (a7) at (5,1);
			\coordinate (a8) at (1,-2);
			\coordinate (a9) at (-1,-3);
			\coordinate (a10) at (-3,1);
			\coordinate (a11) at (-4,5);
			\coordinate (a12) at (-3,7);
			\coordinate (a13) at (-2,9);

			\clip[rounded corners] ($(1)!0.2!(a7)$) -- ($(1)!0.2!(a8)$) -- ($(1)!0.2!(a9)$) -- ($(1)!0.2!(a10)$) -- ($(4)!0.2!(a10)$) -- ($(4)!0.2!(a11)$) -- ($(5)!0.2!(a11)$) -- ($(8)!0.2!(a11)$) -- 
				($(8)!0.2!(a12)$) -- ($(8)!0.2!(a13)$) -- ($(12)!0.2!(a13)$) -- ($(12)!0.2!(a1)$) -- ($(12)!0.2!(a14)$) -- ($(12)!0.2!(a3)$) -- ($(15)!0.2!(a3)$) -- ($(15)!0.2!(a4)$) -- ($(7)!0.2!(a4)$) -- 
				($(7)!0.2!(a5)$) -- ($(7)!0.2!(a6)$) -- ($(6)!0.2!(a6)$) -- ($(6)!0.2!(a7)$) -- cycle;
			\draw[edge] (1)edge(2) (1)edge(3) (1)edge(4) (1)edge(6) (1)edge(a7) (1)edge(a8) (1)edge(a9) (1)edge(a10) (2)edge(3) (2)edge(4) (3)edge(4) (3)edge(5) (3)edge(6)
			(4)edge(5) (4)edge(a10) (4)edge(a11) (5)edge(6) (5)edge(7) (5)edge(8) (5)edge(a11) (6)edge(7) (6)edge(a6) (6)edge(a7) (7)edge(8) (7)edge(9) (7)edge(10) 
			(7)edge(a4)
			(7)edge(a5) (7)edge(a6) (8)edge(9) (8)edge(11) (8)edge(12) (8)edge(a11) (8)edge(a12) (8)edge(a13) (9)edge(10) (9)edge(11) 
			(11)edge(12)
			(12)edge(a1) (12)edge(a14) (12)edge(a3) (12)edge(a13) (15)edge(a3) (15)edge(a4);
			\draw[edge] (9)edge(13) (10)edge(13) (11)edge(13) (10)edge(14) (11)edge(14) (12)edge(14) (7)edge(15) (10)edge(15) (12)edge(15) (13)edge(14) (14)edge(15);
			
			\node[labelsty] at ($1/3*(1)+1/3*(2)+1/3*(3)$) {$A$};
			\node[labelsty] at ($1/3*(1)+1/3*(2)+1/3*(4)$) {$B$};
			\node[labelsty] at ($1/3*(2)+1/3*(3)+1/3*(4)$) {$C$};
			\node[labelsty] at ($1/3*(7)+1/3*(8)+1/3*(9)$) {$D$};
			\node[labelsty] at ($1/3*(8)+1/3*(9)+1/3*(11)$) {$E$};
			\node[labelsty] at ($1/3*(8)+1/3*(11)+1/3*(12)$) {$F$};
			\node[labelsty] at ($1/3*(11)+1/3*(12)+1/3*(14)$) {$G$};
			\node[labelsty] at ($1/3*(12)+1/3*(14)+1/3*(15)$) {$H$};
			\node[labelsty] at ($1/3*(10)+1/3*(14)+1/3*(15)$) {$I$};
			\node[labelsty] at ($1/3*(7)+1/3*(10)+1/3*(15)$) {$J$};
			\node[labelsty] at ($1/3*(7)+1/3*(9)+1/3*(10)$) {$K$};
    \end{scope}
    \begin{scope}[xshift=6.5cm,yshift=-1cm,scale=0.9]
			\coordinate (o) at (0,0);
      \node[vertex,label={[labelsty]above:$A$}] (A) at (0,1) {};
      \node[vertex,label={[labelsty]above:$B$},rotate around=120:(o)] (B) at (A) {};
      \node[vertex,label={[labelsty]above:$C$},rotate around=-120:(o)] (C) at (A) {};
      
      \coordinate (o2) at (0,-4);
      \node[vertex,label={[labelsty]right:$I$}] (I) at ($(o2)+(1,0)$) {};
      \node[vertex,label={[labelsty]right:$J$},rotate around=45:(o2)] (J) at (I) {};
      \node[vertex,label={[labelsty]right:$K$},rotate around=90:(o2)] (K) at (I) {};
      \node[vertex,label={[labelsty]right:$D$},rotate around=135:(o2)] (D) at (I) {};
      \node[vertex,label={[labelsty]right:$E$},rotate around=180:(o2)] (E) at (I) {};
      \node[vertex,label={[labelsty]right:$F$},rotate around=-135:(o2)] (F) at (I) {};
      \node[vertex,label={[labelsty]right:$G$},rotate around=-90:(o2)] (G) at (I) {};
      \node[vertex,label={[labelsty]right:$H$},rotate around=-45:(o2)] (H) at (I) {};
      
      \draw[edge] (A)edge(B) (B)edge(C) (C)edge(A);
      \draw[edge] (H)edge(I) (I)edge(J) (J)edge(K) (K)edge(D) (D)edge(E) (E)edge(F) (F)edge(G) (G)edge(H);
    \end{scope}
    \begin{scope}[xshift=8cm,scale=0.8),yscale=-1]
      \node[vertex,colB] (w7) at (4,4.7) {};
			\node[vertex,colB] (w8) at (1,6) {};
			\node[vertex,colR] (w9) at (3,5.7) {};
			\node[vertex,colR] (w10) at (4,5.5) {};
			\node[vertex,colR,label={[labelsty]above left:$\vertOne$}] (w11) at (2.5,7) {};
			\node[vertex,colB,label={[labelsty]below:$\South$}] (w12) at (2.8,8.2) {};
			\node[vertex,colR,label={[labelsty]right:$\vertTwo$}] (w14) at (3.9,6.7) {};
			\node[vertex,colR,label={[labelsty]left:$\North$}] (w16) at (3.5,6.1) {};
			\node[vertex,colB] (w15) at (5.4,6.3) {};
			\draw[edge,colR] (w9)edge(w10) (w10)edge(w14) (w14)edge(w11) (w11)edge(w9);
			\draw[edge,colB] (w7)edge(w8) (w8)edge(w12) (w12)edge(w15) (w15)edge(w7);
			\draw[edge] (w11)edge(w12) (w14)edge(w12) (w11)edge(w16) (w14)edge(w16);
    \end{scope}
  \end{tikzpicture}
  \caption{On the left, a portion of a triangular polyhedron,
  where we highlight the colors of the vertices and we label the triangles in $T$.
  In the center, the graph~$G_T$. On the right, the red and blue walks.
  We do not claim that these situations do indeed come from flexible polyhedra:
  this example serves solely to illustrate  the concepts.}
  \label{figure:walk}
\end{figure}

\begin{lemma}
	\label{lemma:blue}
	For all vertices~$v$ in the blue walk, we have $\rho_{\infty}(v)=\rho_{\infty}(\South)$.
\end{lemma}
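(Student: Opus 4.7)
The plan is to propagate the value $\rho_\infty(\South)$ along the blue walk via a transitivity argument based on \Cref{lemma:same_color_equal}. First, I would observe that $\South$ itself appears on the blue walk: the triangle $\{\vertOne,\vertTwo,\South\}$ belongs to $T_\cycle$, and since $\rho_\infty(\vertOne)=\rho_0(\vertOne)$ and $\rho_\infty(\vertTwo)=\rho_0(\vertTwo)$ are finite by the very definition of~$Z$, the vertices $\vertOne,\vertTwo$ are red, so $\South$ is the unique blue vertex of this triangle.

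Next I would analyse the local structure of the walk. Each triangle $t \in T$ has no gold vertex by \Cref{lemma:no3} and contains both colors by the definition of~$T$, so it has either 2 red and 1 blue vertices or 1 red and 2 blue vertices. In the first case the triangle contributes to the blue walk only a single vertex (the walk does not move while this triangle is traversed); in the second case it contributes a proper edge of $G$ joining its two blue vertices. Consequently, two consecutive vertices of the blue walk that are actually distinct must be the two blue vertices of some triangle $t = \{v_1,v_2,u\} \in T_\cycle$ in which $u$ is the unique red vertex.

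The core step is to show that such $v_1,v_2$ satisfy $\rho_\infty(v_1)=\rho_\infty(v_2)$. I would apply \Cref{lemma:same_color_equal} with $q_i := \rho_\infty(v_i)$ and $p := \rho_\infty(u)$: the points $q_1,q_2$ are simple infinite with $\col(q_1) = \col(q_2) = \col(\rho_\infty(\South))$ since $v_1,v_2$ are blue, while $p$ is finite since $u$ is red. Because $u$ is adjacent in $G$ to both $v_1$ and $v_2$, \Cref{lemma:adjacentVertices} guarantees that $d(p,q_i)$ is either finite or undefined, and in particular not equal to $\infty$. All hypotheses of \Cref{lemma:same_color_equal} being met, we conclude $q_1 = q_2$.

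Traversing the closed blue walk starting at $\South$ and invoking this equality at every step at which the walk actually moves then yields $\rho_\infty(v) = \rho_\infty(\South)$ for every vertex $v$ in the walk. I expect the only delicate point to be the preliminary combinatorial bookkeeping — pinning down exactly how each triangle of $T_\cycle$ contributes to the walk, so that any distinct consecutive vertices are guaranteed to share a 2 blue + 1 red triangle — but this follows directly from the degree-two structure of~$G_T$ established just before the lemma, together with \Cref{lemma:no3}.
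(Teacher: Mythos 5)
Your proposal is correct and follows essentially the same route as the paper: for two distinct consecutive blue-walk vertices you take the red third vertex of their shared triangle in $T_\cycle$, use \Cref{lemma:adjacentVertices} to rule out the value $\infty$, apply \Cref{lemma:same_color_equal} to get equality, and propagate along the connected blue walk containing~$\South$. The extra combinatorial bookkeeping you add about how each triangle contributes to the walk is fine but not needed beyond what the paper states.
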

\begin{proof}
	Let $v_1$ and $v_2$ be two adjacent vertices of the blue walk.
	Let $r$ be the third vertex of the triangle in~$T_\cycle$ containing~$v_1$ and~$v_2$.
	The vertex~$r$ must be red, therefore $\rho_{\infty}(r)$ is finite.
	For $i\in\{1,2\}$, the value $d\bigl(\rho_{\infty}(v_i),\rho_{\infty}(r)\bigr)$ cannot be~$\infty$ by \Cref{lemma:adjacentVertices}.
	Since $v_1$ and~$v_2$ are blue, $\col\bigl(\rho_{\infty}(v_1)\bigr)=\col\bigl(\rho_{\infty}(v_2)\bigr)$.
	Hence, $\rho_{\infty}(v_1)=\rho_{\infty}(v_2)$ by \Cref{lemma:same_color_equal}.
	The claim follows since the blue walk is connected and contains~$\South$.
\end{proof}

\begin{lemma}
	\label{lemma:red}
	For all vertices~$v$ in the red walk, $\rho_{\infty}(v)$ is contained in~$\Fin_{\rho_{\infty}(\South)}$.
\end{lemma}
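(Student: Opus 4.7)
The plan is to show that for every red vertex $v$ appearing in the red walk, $\rho_\infty(v)$ is finite (which is immediate from the color) and lies in $\Tan_{\rho_\infty(\South)} \Moebius$, which together give membership in $\Fin_{\rho_\infty(\South)}$ by \Cref{definition:finp}.

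First, I would establish the combinatorial fact that every red vertex $v$ in the red walk is adjacent in~$G$ to some blue vertex of the blue walk. Indeed, $v$ appears in some triangle $t \in T_\cycle$, and by the very definition of~$T$ the triangle~$t$ must contain at least one vertex of each color red and blue. Pick a blue vertex~$w$ of~$t$; then $\{v,w\}$ is an edge of $G$ and $w$ belongs to the blue walk.

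Next, combining \Cref{lemma:blue} with \Cref{lemma:adjacentVertices}: since $\rho_\infty(w) = \rho_\infty(\South)$ and $v, w$ are adjacent, we conclude that $d\bigl(\rho_\infty(v), \rho_\infty(\South)\bigr)$ is either finite or undefined, i.e.\ not~$\infty$.

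Finally, I would analyze $d$ directly using its definition. Write $\rho_\infty(v) = (x:y:z:r:1)$ since $v$ is red, and $\rho_\infty(\South) = (\tilde{x}:\tilde{y}:\tilde{z}:\tilde{r}:0)$ since $\South$ is simple infinite (recall that $\rho_\infty(\South) \neq \dinf$ was observed right after \Cref{lemma:adjacentVertices}). Then the second coordinate of $d\bigl(\rho_\infty(v), \rho_\infty(\South)\bigr) = \bigl(\sprod{\rho_\infty(v)}{\rho_\infty(\South)} : 1 \cdot 0\bigr)$ vanishes, so the value of $d$ is necessarily either $\infty$ or undefined. Having ruled out $\infty$ in the previous paragraph, $d$ must be undefined, which by the equivalence recalled just after the definition of $\Tan_p \Moebius$ means $\sprod{\rho_\infty(v)}{\rho_\infty(\South)} = 0$, i.e.\ $\rho_\infty(v) \in \Tan_{\rho_\infty(\South)} \Moebius$. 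Combined with finiteness of $\rho_\infty(v)$, this gives $\rho_\infty(v) \in \Fin_{\rho_\infty(\South)}$, as desired.

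There is no real obstacle here: the argument is a direct concatenation of previous results once the combinatorial observation about adjacency (every red vertex of the red walk sits in a two-colored triangle of~$T_\cycle$) is spelled out. The only subtle point to double-check is that the chosen blue neighbor indeed satisfies the hypothesis of \Cref{lemma:blue}, which is why it is important to pick~$w$ from a triangle belonging to~$T_\cycle$ so that $w$ is a vertex of the blue walk.
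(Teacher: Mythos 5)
Your proposal is correct and follows essentially the same route as the paper: find a blue neighbor $w$ of~$v$ in the blue walk, apply \Cref{lemma:blue} and \Cref{lemma:adjacentVertices}, and conclude via \Cref{definition:finp}. Your extra steps (justifying the existence of the blue neighbor through a triangle of~$T_\cycle$, and unwinding why ``not $\infty$'' forces $d$ to be undefined when one point is simple infinite) merely spell out what the paper leaves implicit in the definition of~$\Fin_p$.
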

\begin{proof}
	There exists an adjacent vertex~$w$ of~$v$ in the blue walk.
	By \Cref{lemma:blue}, $\rho_{\infty}(w)=\rho_{\infty}(\South)$.
	The value $d\bigl(\rho_{\infty}(v),\rho_{\infty}(w)\bigr)$ is not infinity by \Cref{lemma:adjacentVertices} and $\rho_{\infty}(v)$ is finite,
	therefore the point~$\rho_{\infty}(v)$ is in $\Fin_{\rho_{\infty}(\South)}$ by \Cref{definition:finp}.
\end{proof}

\begin{lemma}
	\label{lemma:cycle}
	There is a cycle such that it contains the edge $\{ \vertOne, \vertTwo \}$,
	its vertices are in the red walk and
	it is an induced subgraph of $G$.
\end{lemma}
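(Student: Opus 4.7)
The strategy is to first use the closed structure of the red walk to produce a cycle in $G$ containing the edge $\{\vertOne,\vertTwo\}$, and then to reduce this cycle to an induced one without losing that edge. Note that a cycle whose vertex set is a subset of the vertices appearing in the red walk automatically satisfies the condition "its vertices are in the red walk", and that the shortcut and chord-splitting operations I will use only delete vertices and edges, so both operations preserve this inclusion.

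The first step is to observe that the edge $\{\vertOne,\vertTwo\}$ is traversed exactly once by the red walk. Indeed, by the definition of the red walk, an edge of $G$ is contributed to it only by a triangle in $T_\cycle$ whose vertices realize the coloring (two red, one blue). The only triangles of $G$ containing both $\vertOne$ and $\vertTwo$ are $\{\vertOne,\vertTwo,\South\}$ and $\{\vertOne,\vertTwo,\North\}$: by construction the first belongs to $T_\cycle$ and has exactly the desired pattern, while the second has three red vertices and hence does not lie in $T$ at all. I would then parametrise the red walk as $u_0,u_1,\ldots,u_n$ with $u_0=u_n=\vertOne$ and $u_1=\vertTwo$, so that the unique traversal of $\{\vertOne,\vertTwo\}$ is the step $u_0\to u_1$. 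The sub-walk $u_1,u_2,\ldots,u_n$ from $\vertTwo$ back to $\vertOne$ then avoids this edge. Applying the standard shortcut procedure, which repeatedly removes the segment between two occurrences of a repeated vertex, extracts from this sub-walk a simple path $P$ from $\vertTwo$ to $\vertOne$ in $G$ whose vertex and edge sets are contained in those of the sub-walk. Closing $P$ with the edge $\{\vertOne,\vertTwo\}$ produces a cycle $C$ in $G$ containing that edge and having all its vertices in the red walk.

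It remains to refine $C$ into an induced cycle. If $C$ admits a chord, that is, an edge of $G$ connecting two non-consecutive vertices of $C$, then the chord partitions $C$ into two strictly shorter cycles, exactly one of which contains the edge $\{\vertOne,\vertTwo\}$. I would replace $C$ by this shorter cycle and iterate; since the length of $C$ strictly decreases at each step, the process terminates at an induced cycle containing $\{\vertOne,\vertTwo\}$ with vertex set in the red walk. The only genuinely delicate point is the uniqueness of the traversal of $\{\vertOne,\vertTwo\}$ by the red walk: it is this fact that guarantees that both the path extraction and the chord-elimination preserve the edge. The rest is elementary graph-theoretic bookkeeping.
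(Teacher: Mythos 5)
Your proposal is correct and takes essentially the same route as the paper: both hinge on the observation that the red walk traverses $\{\vertOne,\vertTwo\}$ only once because the second triangle $\{\vertOne,\vertTwo,\North\}$ has all vertices red and hence is not in $T$, after which a cycle through that edge with vertices in the red walk is extracted and then made induced. The paper simply selects a cycle of minimum length among such cycles, which is the same mechanism as your explicit chord-elimination, just stated more briefly.
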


\begin{proof}
	If the red walk, which is closed, contained the edge $\{ \vertOne, \vertTwo \}$ twice,
	then both triangles containing $\{ \vertOne, \vertTwo \}$ would belong to the cycle $\cycle$.
	But this is not possible since $\{ \vertOne, \vertTwo, \North \}$ is not an element of~$T$
	because all its vertices are red.
	Hence, there is a cycle in $G$ containing $\{ \vertOne, \vertTwo \}$ such that all its vertices are in the red walk.
	Among all such cycles, we take one with the minimum number of edges.
	This guarantees that the cycle is an induced subgraph.  
\end{proof}

Now we are ready to prove the main result.

\begin{proof}[{Proof of \Cref{theorem:main}}]
 We have to show the existence of a cycle passing through the edge $\{\vertOne,\vertTwo\}$ and of a sign assignment for its edges
 such that the signed sum of the lengths of the edges in the cycle is zero.
 For that, we employ the constructions introduced so far, namely:
 \begin{enumerate}\renewcommand{\theenumi}{(\alph{enumi})}\renewcommand{\labelenumi}{\theenumi}
   \item choosing $\rho_{\infty}$ to be the special element of~$\variety$ 
   		such that $\rho_{\infty}(\South) \in \Moebius_{\infty}$ from \Cref{definition:limit_realization},
   \item using $\rho_{\infty}$ to color the vertices of the polyhedron (\Cref{definition:coloring}),
   \item constructing the red walk for all of whose vertices $v$ 
   		we have $\rho_{\infty}(v) \in \Fin_{\rho_{\infty}(\South)}$ (\Cref{lemma:red}), 
   \item constructing an induced cycle $\mathcal{D}$ containing $\{\vertOne,\vertTwo\}$ and whose vertices are in the red walk  (\Cref{lemma:cycle}).
 \end{enumerate} 
 We write $\mathcal{D} := (v_1, \dotsc, v_k, v_{k+1} = v_1)$.
 Since for all vertices~$v_j$ in~$\mathcal{D}$ the point $\rho_{\infty}(v_j)$ is in~$\Fin_{\rho_{\infty}(\South)}$,
 we know from \Cref{lemma:restriction_distance} 
 that there exists a map $\pi \colon \Fin_{\rho_{\infty}(s)} \longrightarrow \C$ such that
 for all $j \in \{1, \dotsc, k\}$
 \[
  d \bigl( \rho_{\infty}(v_j), \rho_{\infty}(v_{j+1}) \bigr) = \Bigl( \bigl( \pi(\rho_{\infty}(v_j)) - \pi(\rho_{\infty}(v_{j+1})) \bigr)^2 : 1 \Bigr)\,.
 \]
 By construction of the set~$W$, see \Cref{equation:W}, and 
 taking into account \Cref{remark:complex_distance}, 
 for all elements $\rho \in \variety$ we have
 \[
  \lambda_{\{v_j, v_{j+1}\}}^2 = -2\sprod{\rho(v_j)}{\rho(v_{j+1})} \,,
 \]
 where we take the representatives for~$\rho(v_j)$ and~$\rho(v_{j+1})$ with $h$-coordinate equal to~$1$.
 Hence in particular
 \[
  \lambda_{\{v_j, v_{j+1}\}}^2 = 
  -2\sprod{\rho_{\infty}(v_j)}{\rho_{\infty}(v_{j+1})} = 
  -2 \bigl( \pi(\rho_{\infty}(v_j)) - \pi(\rho_{\infty}(v_{j+1})) \bigr)^2 \,.
 \]
 We conclude that 
 \[
  \lambda_{\{v_j, v_{j+1}\}} = \pm \, \imath \sqrt{2} \Bigl( \pi \bigl( \rho_{\infty}(v_j) \bigr) - \pi \bigl( \rho_{\infty} (v_{j+1}) \bigr) \Bigr) \,,
 \]
 where $\imath$ is the imaginary unit.
 Therefore we can choose integers $\eta_{j} \in \{1, -1\}$ such that 
 \[
  \sum_{j=1}^{k} \eta_j \, \lambda_{\{v_j, v_{j+1}\}}
 \]
 is a telescoping sum, which yields~$0$.
 
 Clearly, the cycle~$\mathcal{D}$ does not contain the vertex~$\South$ as it is blue.
 If the vertex $\North$ was in~$\mathcal{D}$,
 then the construction in \Cref{lemma:cycle} would imply that $\mathcal{D} = (\vertOne,\vertTwo,\North,\vertOne)$.
 But then the condition on the edge lengths of~$\mathcal{D}$ would imply that 
 the three vertices $\vertOne,\vertTwo,\North$ are collinear throughout the flex.
 This would contradict the assumption that the dihedral angle at $\{\vertOne,\vertTwo\}$ changes. 
 Thus the statement follows.
\end{proof}

Notice that in our previous works in which we study flexible realizations of graphs in the plane \cite{flexibleLabelings} and on the sphere~\cite{GGLSsphere},
a flex implies the existence of a combinatorial object --- namely, a special edge coloring --- called a \emph{NAC}, resp.\ \emph{NAP-coloring}.
On the other hand, the existence of a NAC, resp.\ NAP-coloring, for a graph provides a construction of a flex of the graph in the plane, resp. on the sphere,
though the obtained flexible realizations might be rather degenerate.
Exploiting the idea of so-called \emph{butterfly motions}  (see \Cref{figure:butterfly_bricard,figure:butterfly_complicated})
described, among other sources, in \cite[Section~6]{flexibleLabelings},
we obtain analogous results also for flexible polyhedra.
The combinatorial structure in this case is a separating cycle with a sign assignment to its edges.

\begin{proposition}
	\label{proposition:flex_construction}
	Let $G$ be the $1$-skeleton of a triangular polyhedron.
	Let $S$ be a cycle in~$G$ that separates the graph, namely,
	removing edges and vertices of~$S$ from~$G$ yields a disconnected graph.
	For any sign assignment to the edges of~$S$, not all having the same sign, the polyhedron admits 
	a realizations with a flex such that 
	the signed sum of the edge lengths induced by the realization in the cycle~$S$ is zero
	and the dihedral angles at all edges of~$S$ vary along the flex.
\end{proposition}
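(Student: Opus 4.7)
The plan is to build a \emph{butterfly realization} in which $S$ lies on a straight line $L$ and the components of $G \setminus V(S)$ hang off $L$ in separate half-planes, and then to flex by rotating those half-planes independently around $L$. Enumerate the edges of $S$ cyclically as $e_1, \dotsc, e_m$ with given signs $\sigma_j \in \{\pm 1\}$. Since the $\sigma_j$ are not all equal, one can pick positive reals $\lambda_1, \dotsc, \lambda_m$ with $\sum_{j=1}^m \sigma_j \lambda_j = 0$ (balance the sums over positive and negative edges). Writing $S = (v_1, \dotsc, v_m, v_{m+1} = v_1)$, set $\rho(v_1) := (0, 0, 0)$ and $\rho(v_{j+1}) := \rho(v_j) + \sigma_j \lambda_j (1, 0, 0)$; the zero-sum condition ensures that the cycle closes up, and the induced lengths of the edges of $S$ are exactly the $\lambda_j$.

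Since $S$ separates $G$, the subgraph $G \setminus V(S)$ splits into components $C_1, \dotsc, C_k$ with $k \geq 2$; note that no edge of $G$ joins two distinct $C_\ell$'s, else the two components would collapse. Choose pairwise distinct initial angles $\theta_1^0, \dotsc, \theta_k^0$, and for each $\ell$ place every vertex of $C_\ell$ injectively at points of the half-plane $H_\ell := \{(x, r\cos\theta_\ell^0, r\sin\theta_\ell^0) : x \in \R,\, r > 0\}$, so that adjacent vertices of $G$ receive distinct images. This extends $\rho$ to a realization of the whole polyhedron and fixes the remaining edge lengths. The flex is then given by continuous maps $\theta_\ell \colon [0, 1) \to \R$ with $\theta_\ell(0) = \theta_\ell^0$ and pairwise non-constant differences: letting $R_\ell(t)$ be the rotation about the $x$-axis by $\theta_\ell(t) - \theta_\ell^0$, define $f(t)(v) := \rho(v)$ for $v \in V(S)$ and $f(t)(w) := R_\ell(t)(\rho(w))$ for $w \in C_\ell$. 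Each $R_\ell(t)$ is an isometry fixing $L$ pointwise, so every edge length is preserved, and the non-constancy of $\theta_i - \theta_j$ makes the realizations pairwise non-congruent.

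It remains to verify that the dihedral angle at every edge of $S$ varies along this flex, and this is where I expect the main difficulty to lie. For an edge $\{u, v\} \in S$ with adjacent triangles $\{u, v, w_1\}, \{u, v, w_2\}$, the dihedral angle at $\{u, v\}$ is, up to an additive constant, the difference of the angular coordinates of $w_1$ and $w_2$ around $L$. When $w_1 \in C_i$ and $w_2 \in C_j$ with $i \neq j$, this difference equals $\theta_i(t) - \theta_j(t)$ up to a constant, and varies by construction. The hard case is when $w_1$ and $w_2$ lie in the \emph{same} component $C_\ell$: then $R_\ell(t)$ rotates them rigidly together and the dihedral angle stays constant. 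To rule this out, one uses the separating hypothesis together with the incidence structure of triangles along $S$ (for instance, in the pseudo-manifold case a separating cycle places the two triangles sharing an edge of $S$ on opposite sides, hence in different components); if necessary, the partition $C_1, \dotsc, C_k$ can be refined further so that the two triangles of each $S$-edge always fall into distinct rotating pieces. Once this is handled, the signed sum of the $S$-edge lengths equals $\sum_j \sigma_j \lambda_j = 0$ by construction, and the proposition follows.
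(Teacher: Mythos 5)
Your construction is the same butterfly construction as the paper's proof: place the vertices of $S$ on the $x$-axis with signed steps (the hypothesis that the signs are not all equal is exactly what lets the chain close up with zero signed sum), place the remaining vertices off the axis, and flex by rotating the connected components of $G\setminus V(S)$ independently about the axis. The paper's proof is precisely this and stops there, so up to your third paragraph you are reproducing its argument in more detail; the divergence is only in your attempt to verify the clause about the dihedral angles, which the paper leaves implicit.

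Two remarks on that third paragraph. First, the fallback repair ``refine the partition $C_1,\dotsc,C_k$ further'' cannot work: two vertices lying in the same connected component of $G\setminus V(S)$ are joined by a path of edges avoiding $S$, and two points off the axis rotated by different angles change their mutual distance, so an edge-length-preserving motion of this type must rotate each component rigidly as a whole. Hence if both apexes over some edge of $S$ lie in one component, the dihedral angle there is frozen under every such flex and no refinement helps. Second, your ``opposite sides'' observation is the correct justification exactly when $S$ separates the underlying surface: then the two triangles over each edge of $S$ lie in different regions, their apexes (when not on $S$) lie in different components, and the angle difference $\theta_i-\theta_j$ makes the dihedral angle vary. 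This covers in particular polyhedra homeomorphic to a sphere with induced cycles of length at least four, which is the setting of the paper's remark after the proposition; note also the degenerate possibility that an apex over an edge of $S$ is itself a vertex of $S$ (e.g.\ a chord triangle), in which case that triangle is realized collinearly and its dihedral angle is not defined. For a cycle that separates the graph but not the surface --- possible for higher-genus or non-manifold triangular polyhedra, e.g.\ when $V(S)$ cuts off a single vertex whose entire link lies on $S$ while the cycle itself is non-separating as a curve --- both apexes over some edge of $S$ can lie in the same component, and neither your argument nor the paper's makes the dihedral angle at that edge vary. So your instinct about where the difficulty lies is sound; the honest conclusion is that the butterfly argument establishes the statement whenever the two triangles over each edge of $S$ have their apexes in different components of $G\setminus V(S)$, and the remaining generality would need either a restriction on $S$ or an additional argument.
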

\begin{proof}
	Write $S=(v_1, \dots, v_k, v_{k+1}=v_1)$.
	We construct a realization~$\rho$ of the polyhedron as follows:
	set $\rho(v_j)=(x_j, 0, 0)$, where
	$x_j<x_{j+1}$ if $\{v_j,v_{j+1}\}$ has the positive sign and $x_j>x_{j+1}$ otherwise.
	The assumption that not all signs on the edges of~$S$ are the same
	guarantees that we can choose numbers~$\{x_j\}_{j=1}^{k}$ satisfying the previous requirements.
	The rest of the vertices of the graph is mapped to arbitrary points.
	The realization~$\rho$ has a flex since the vertices of different connected components of $G \setminus S$
	can rotate independently around the $x$-axis. Hence, the statement follows.
\end{proof}

We remark that if a polyhedron is not homeomorphic to a sphere,
then a cycle is not necessarily separating.
On the other hand, if the polyhedron is homeomorphic to a sphere, 
then any induced cycle with at least four edges separates the graph.

\begin{figure}[ht]
\centering
 \tikz[baseline={(0,0)}]{\node at (0,0) {\includegraphics[width=.31\textwidth, trim = {0 0.5cm 0 0.5cm}, clip]{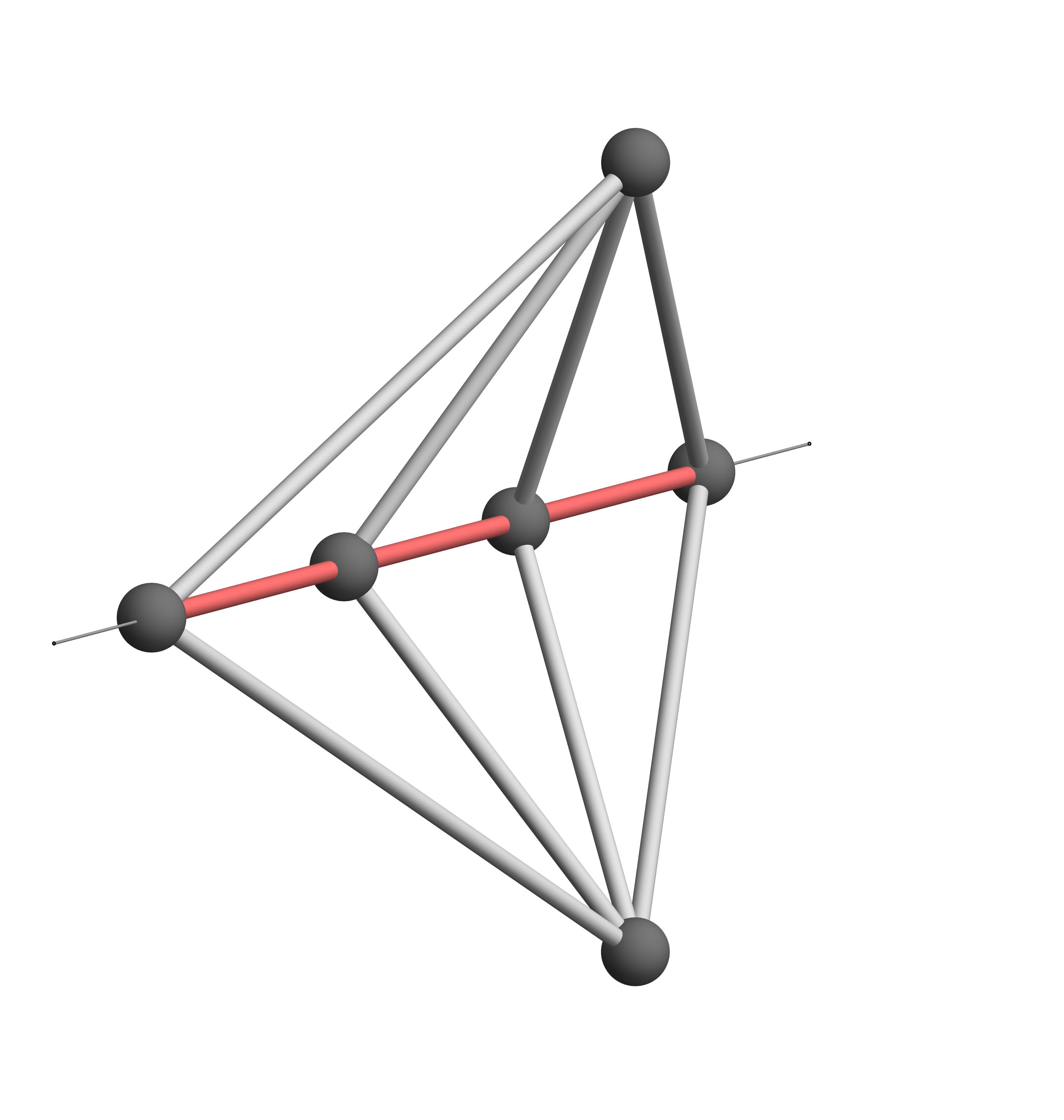}}}
 \caption{A butterfly motion of an octahedron. Notice the collinear realizations of vertices of a separating cycle, highlighted in red.}
 \label{figure:butterfly_bricard}
\end{figure}
\begin{figure}[H]
\centering
 \tikz[baseline={(0,0)}]{\node at (0,0) {\includegraphics[width=.18\textwidth, trim = {0 4.5cm 0 0.5cm}, clip]{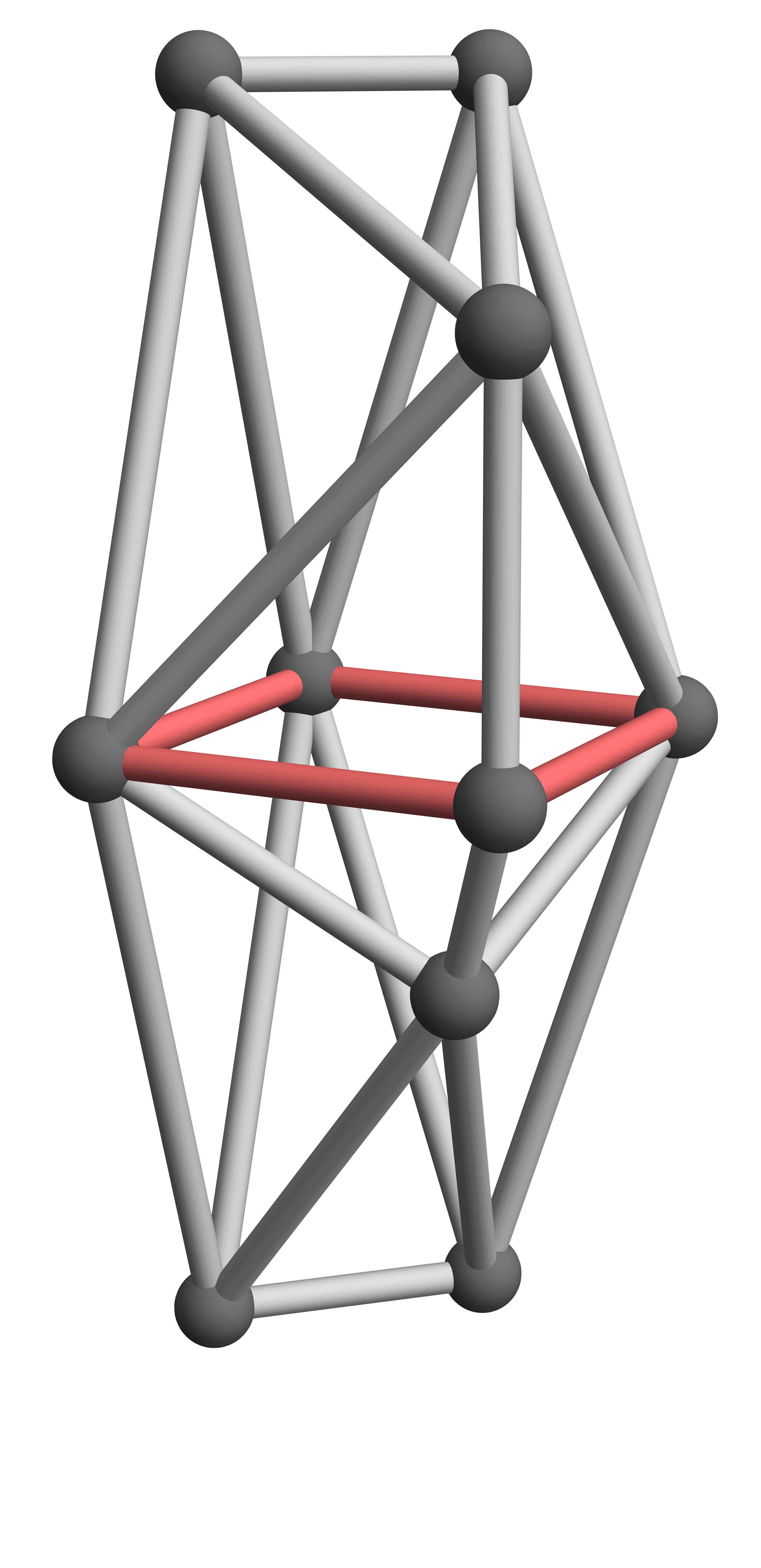}}}
 \qquad\qquad
 \tikz[baseline={(0,0)}]{\node at (0,0) {\includegraphics[width=.305\textwidth, trim = {0 1.5cm 2cm 1.75cm}, clip]{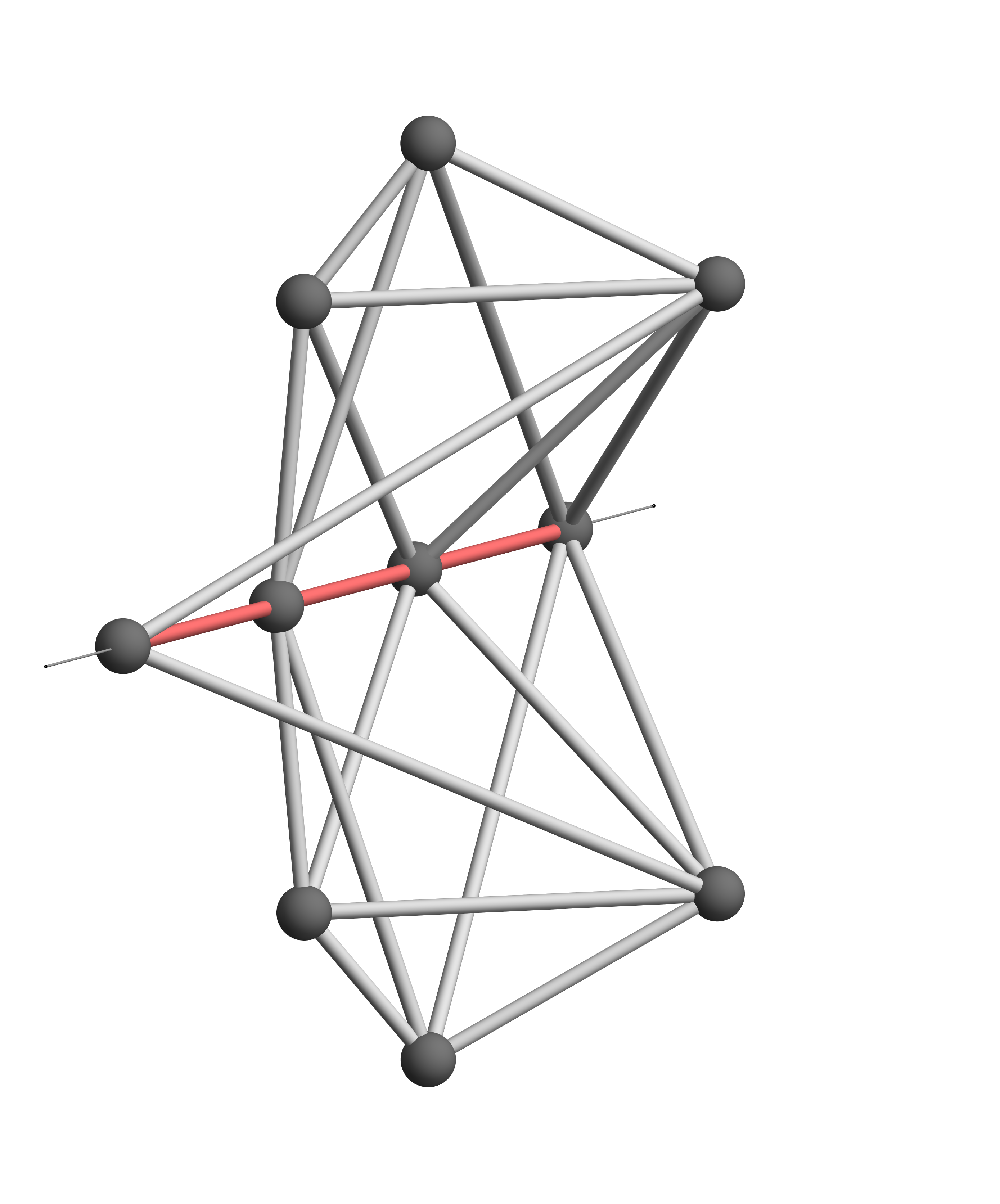}}}
 \tikz[baseline={(0,0)}]{\node at (0,0) {\includegraphics[width=.29\textwidth, trim = {2.25cm 1.5cm 3.5cm 1.75cm}, clip]{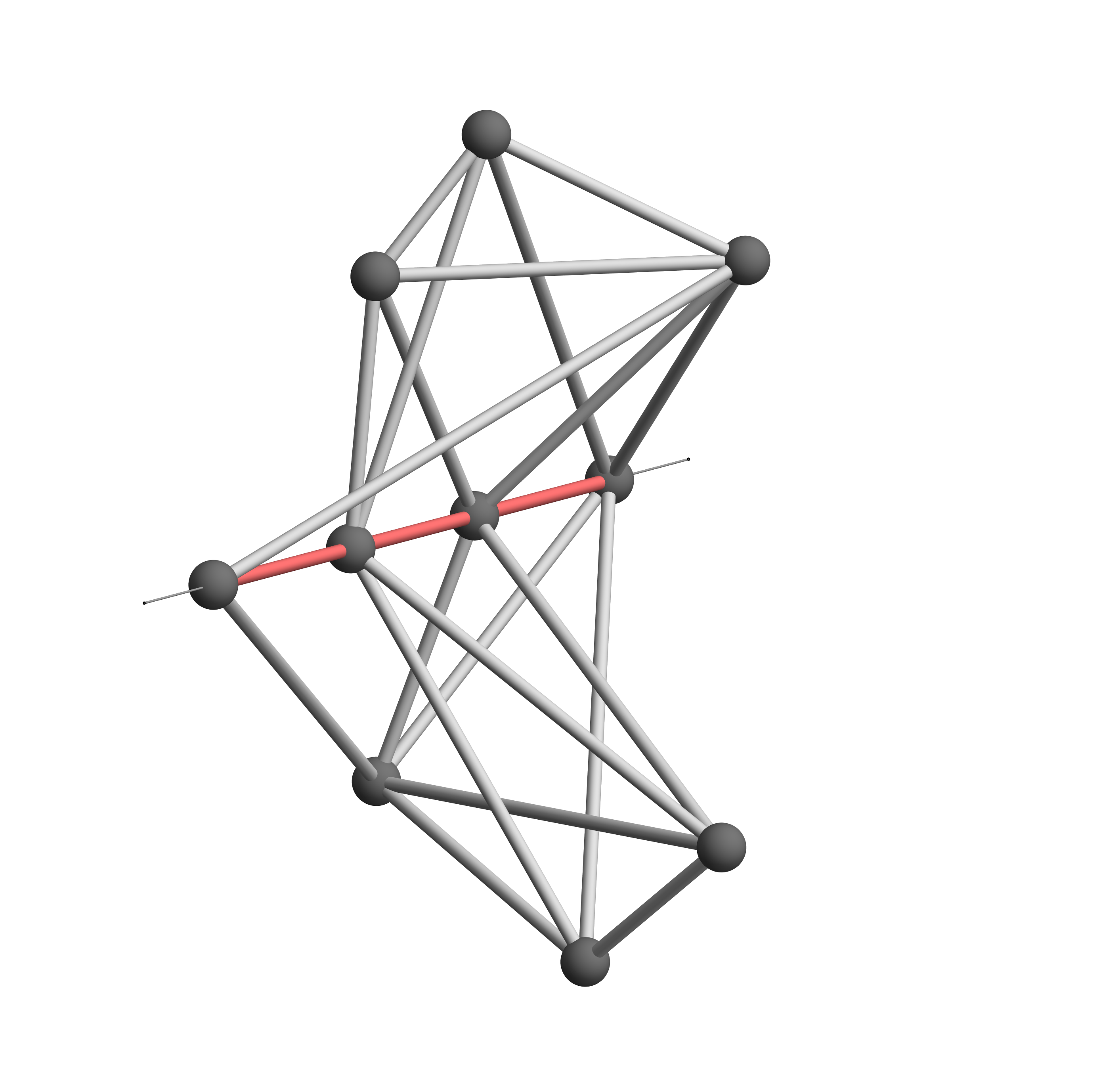}}}
 \caption{Instances, in a symmetric and a non-symmetric layout, of butterfly motions of a more complicated polyhedron (on the left).
 Notice the collinear realizations of vertices of a separating cycle, highlighted in red.}
 \label{figure:butterfly_complicated}
\end{figure}

\bigskip
\bigskip
\textsc{(MG) International School for Advanced Studies/Scuola Internazionale Superiore di
Studi Avanzati (ISAS/SISSA), Via Bonomea 265, 34136 Trieste, Italy}\\
Email address: \texttt{mgallet@sissa.it}

\textsc{(GG) Johann Radon Institute for Computational and Applied Mathematics (RICAM), Austrian
Academy of Sciences}\\
Email address: \texttt{georg.grasegger@ricam.oeaw.ac.at}

\textsc{(JL, JS) Johannes Kepler University Linz, Research Institute for Symbolic Computation (RISC)}\\
Email address: \texttt{jschicho@risc.jku.at}

\textsc{(JL) Department of Applied Mathematics, Faculty of Information Technology, Czech Technical University in Prague}\\
Email address: \texttt{jan.legersky@fit.cvut.cz}

\end{document}